\documentclass[mathpazo]{cicp}

\usepackage{amsmath}
\usepackage{bm}
\usepackage{fancyhdr}
\usepackage{color}
\usepackage{amsmath}
\usepackage{amsfonts}
\usepackage{dsfont}
\usepackage{mathrsfs}
\usepackage{latexsym}
\usepackage{graphicx}
\usepackage{epstopdf}
\usepackage{amssymb}
\usepackage{indentfirst}
\usepackage{subfigure}
\usepackage{booktabs}
\usepackage{cite}
\usepackage{float}
\usepackage{caption}
\usepackage{booktabs}
\usepackage{threeparttable}
\usepackage{multicol}
\usepackage{multirow}
\usepackage{geometry}
\usepackage{lineno}
\usepackage{amsbsy}
\usepackage{bm}
\usepackage{dcolumn}
\usepackage{mathrsfs}
\usepackage[title]{appendix}
\usepackage{stmaryrd}
\usepackage{tikz}
\usepackage[colorlinks=true, linkcolor=blue, citecolor=blue, urlcolor=red]{hyperref}
\usepackage{algorithmic, algorithm}
\usepackage{comment}
\tikzset{>=stealth}

\newcommand{\tol}{\mathtt{tol}}

\begin{document}
\title{Adaptive Multilevel Methods for the Maxwell Eigenvalue Problem}

\author[Q.~Liang, X.~Xu and Q.~Zhang]{Qigang Liang, Xuejun Xu\corrauth\ and Qingquan Zhang}

\address{School of Mathematical Sciences, Tongji University, Shanghai 200092, China and Key Laboratory of Intelligent Computing and Applications (Tongji University), Ministry of Education}

\email{{\tt qigang\_liang@tongji.edu.cn} (Q.~Liang), {\tt xuxj@tongji.edu.cn} (X.~Xu), {\tt 2211332@tongji.edu.cn} (Q.~Zhang)}

\begin{abstract}
In this paper, we propose an adaptive multilevel preconditioned Helmholtz-Jacobi-Davidson (PHJD) method for the Maxwell eigenvalue problem with singularities. The key idea in this work is to employ the local multilevel method for preconditioning the Jacobi-Davidson correction equation. It is 
shown that our convergence factor is quasi-optimal, which means the convergence factor is independent of mesh sizes and mesh levels provided the coarse mesh is sufficiently fine. Numerical experiments on complex domains are carried out to confirm the theoretical results and demonstrate the efficiency of the proposed method.
\end{abstract}

\ams{65N25, 65N30, 78M10}
\keywords{Maxwell eigenvalue problem, local multilevel method, preconditioned Helmholtz-Jacobi-Davidson method}

\maketitle

\section{Introduction}
\label{sec1}
The Maxwell eigenvalue problem, arising from electromagnetic waveguides and resonances in cavities, plays an important role in computational electromagnetism (e.g., \cite{balanis2012advanced,MR2652780,MR2009375}). The governing equations are
\begin{align*}
    \bm{\mathrm{curl}}(\mu^{-1}\bm{\mathrm{curl}} \bm{u}) =&~ \omega^2 \varepsilon \bm{u} ~~~ \text{in} ~\Omega,\\
    \bm{\mathrm{div}}(\varepsilon \bm{u}) =& ~0 ~~~~~~~~ \text{in} ~\Omega,\\
    \bm{n} \times \bm{u} =& ~0 ~~~~~~~~ \text{on} ~\partial \Omega,
\end{align*}
where $\Omega \subset \mathbb{R}^3$ is a bounded polyhedral domain and $\bm{n}$ is the unit outward normal vector. The coefficients $\varepsilon$ and $\mu$ are the real relative electric permittivity and magnetic permeability, respectively, and $\omega$ is the resonant angular frequency of the electromagnetic wave for the cavity $\Omega$. Assuming the medium is homogeneous and isotropic, we may set $\varepsilon = \mu = 1$. For convenience, we denote $\lambda:= \omega^2$.

Edge finite elements, introduced by N\'ed\'elec (see \cite{MR592160}), have been widely used to solve the Maxwell eigenvalue problem. The convergence properties of edge finite element methods can be found in  \cite{MR1701792,caorsi2000convergence,fumio1989discrete,monk2001discrete,russo2009finite} and the references therein. Due to geometric irregularities such as re-entrant corners, the eigenfunction corresponding to the principal eigenvalue often exhibit pronounced singularities. Consequently, if a quasi-uniform mesh is still employed to discretize the Maxwell eigenvalue problem, a larger number of degrees of freedom will be required to achieve the desired accuracy. Adaptive edge finite element methods (AEFEM) have been proven to be highly effective in addressing the local singularities since they may accurately capture the singular behavior of eigenfunctions. In \cite{MR3918688}, Boffi introduces a residual-type a posteriori error estimator for the Maxwell eigenvalue problem and proves its reliability and efficiency. Moreover, the optimal convergence of the adaptive scheme in the case of simple eigenvalues is presented in \cite{MR3916956}. 
 
 The procedure of the AEFEM method is as follows
\begin{equation*}
    \textbf{Solve}\to \textbf{Estimate}\to \textbf{Mark}\to \textbf{Refine}.
\end{equation*}

As the number of adaptive refinement levels increases, we need to solve the large-scale discrete algebraic eigenvalue system arising from the Maxwell eigenvalue problem in the \textbf{Solve} step, which is both challenging and time-consuming. The large kernel of the $\bm{\mathrm{curl}}$ operator poses significant challenges to the design of algorithms, as the iterative process may plunge into this kernel. To prevent such an undesirable situation, Hiptmair and Neymeyr \cite{hiptmair2002multilevel} propose a projected preconditioned inverse iteration method. Multilevel preconditioned technique is applied to solve shifted Maxwell equations with Helmholtz projection handling the kernel of the $\bm{\mathrm{curl}}$ operator efficiently. Moreover, this idea has been generalized to adaptive schemes in \cite{chen2010adaptive}. A different approach, the so-called two-grid method \cite{zhou2014two}, is proposed for solving Maxwell eigenvalue problems. This method reduces the computation of Maxwell eigenvalue problems on a fine grid to solving linear indefinite Maxwell equations on the same fine grid and original eigenvalue problems on a much coarser grid. The method maintains asymptotically optimal accuracy under the assumption $h = O(H^3)$, while significantly reducing the overall computational cost. Recently, Liang and Xu \cite{MR4566815} propose a two-level preconditioned Helmholtz subspace iterative method for solving Maxwell eigenvalue problems based on domain decomposition. By ingeniously reducing the fine-scale eigenvalue problem to a sequence of parallel preconditioned systems and Helmholtz projections, the method achieves optimality for degrees of freedom, scalability for parallel subdomains and robustness for clustered eigenpairs.

The Jacobi-Davidson method \cite{sleijpen2000jacobi}, has been proven to be a powerful tool for solving eigenvalue problems. However, for large-scale discrete eigenvalue problems, the Jacobi-Davidson correction operator typically becomes ill-conditioned. Therefore, it is important to develop an efficient preconditioning technique.   The two-level preconditioned Jacobi-Davidson methods for eigenvalue problems are proposed by Zhao, Hwang and Cai \cite{MR3643572} and extended by Wang and Xu \cite{wang2018two,Wang2018OnTC} for 2$m$-th order ($m$ = 1, 2) elliptic eigenvalue problems. Recently, Guo, Liang and Xu propose a local multilevel preconditioned Jacobi-Davidson Method for elliptic eigenvalue problems on adaptive meshes in \cite{Guo2025title}. It is proved that the convergence factor of the method proposed in \cite{Guo2025title} is independent of the degrees of freedom and mesh levels. For the Maxwell eigenvalue problem, Liang and Xu \cite{2022A} propose an efficient two-level preconditioned Helmholtz-Jacobi-Davidson method, which performs well in practice and has been shown to be both optimal and scalable. Although the two-level PHJD method based on domain decomposition is proved to be effective on quasi-uniform grids, it may be limited for the Maxwell eigenvalue  problem with singularities. 

In this paper, we present an adaptive multilevel PHJD method to solve the Maxwell eigenvalue problem with singularities. In each outer iteration, we first solve a preconditioned Jacobi–Davidson correction equation. Then, we solve a Helmholtz projection system to ensure that the numerical solution is divergence-free in the discrete sense. Finally, we update the new iterative solution in the expanding Davidson subspace. To address the ill-conditioning of the Jacobi–Davidson correction equation, we design an efficient local multilevel preconditioner based on the local successive subspace correction with shifted techniques. With respect to the Laplace equation in the Helmholtz projection system, we choose the existing local multilevel preconditioner which maintains optimal and scalable performance under adaptive mesh refinement.

It is worth noting that the application of the adaptive multilevel PHJD method is far from straightforward. First, some finite element estimates for the Maxwell eigenvalue problem, that hold on quasi-uniform meshes in \cite{2022A}, are no longer valid on adaptive meshes. Second, since the Jacobi–Davidson correction equation is associated with the shifted Maxwell equations, some fundamental theoretical formulations in the local multilevel method need to be reformulated. Third, compared to elliptic eigenvalue problems, the \text{curl} operator in the Maxwell eigenvalue problem has an infinite-dimensional kernel, which makes the design and analysis of iterative methods on adaptive meshes difficult. By overcoming the challenges above, we prove that the approximate eigenvalue in the adaptive multilevel PHJD method converges uniformly to the discrete principal eigenvalue, provided that the coarse grid is sufficiently small.

The rest of this paper is organized as follows. In section 2, we introduce model problems and the finite element discretization. The adaptive multilevel PHJD method is presented in section 3.  Section 4 is dedicated to giving some useful lemmas and the detailed proof of the main result. In the final section, the 
numerical results are given to  confirm the
theoretical results and demonstrate the efficiency of the proposed method.

\section{Model problems and preliminaries}
\label{sec2}
In this section, we introduce model problems and preliminaries. For the sake of simplicity we assume that $\Omega$ is simply connected and its boundary is connected. We start with the introduction of some Hilbert spaces
\begin{align*}
H_0^1(\Omega):=& \left\{u \in H^1(\Omega)\mid u|_{\partial \Omega} = 0\right\},\\
\bm{H}(\bm{\mathrm{curl}} ; \Omega):=&
\left \{\bm{u} \in L^2(\Omega)^3 \mid \bm{\mathrm{curl}}\bm{u} \in L^2(\Omega)^3\right \}, \\
\bm{H}_0(\bm{\mathrm{curl}} ; \Omega):=&
\left \{\bm{u} \in \bm{H}(\bm{\mathrm{curl}} ; \Omega)\mid \bm{u} \times\left.\bm{n}\right|_{\partial \Omega}=\mathbf{0} \right \}, \\
\bm{H}_0(\bm{\mathrm{curl}}^0 ; \Omega):=&
\left \{\bm{u} \in \bm{H}_0(\bm{\mathrm{curl}} ; \Omega)\mid \bm{\mathrm{curl}}\bm{u}=\mathbf{0}  \right \}, \\
      \bm{H}(\mathrm{div} ; \Omega):=&
\left \{\bm{u} \in L^2(\Omega)^3 \mid \mathrm{div}\bm{u} \in L^2(\Omega)^3\right \},\\
      \bm{H}(\mathrm{div}^0 ; \Omega):=&
\left \{\bm{u} \in L^2(\Omega)^3 \mid \mathrm{div}\bm{u} =0 \right \}.
\end{align*}
The spaces $\bm{H}(\bm{\mathrm{curl}} ; \Omega)$ and $ \bm{H}(\mathrm{div} ; \Omega)$ are equipped with the norms
\[
\|\bm{u}\|_{\bm{\mathrm{curl}}} := \left( \|\bm{u}\|^2 + \|\bm{\mathrm{curl}}\, \bm{u}\|^2 \right)^{1/2}, 
\qquad
\|\bm{u}\|_{\mathrm{div}} := \left( \|\bm{u}\|^2 + \|\mathrm{div}\, \bm{u}\|^2 \right)^{1/2},
\]
where $\|\cdot\|$ is the usual $L^2$-norm induced by $L^2$-inner product $(\cdot,\cdot)$.

The variational form of the eigenvalue problem for the Maxwell system reads as follows
\begin{equation}\label{eq:2_1}
\begin{cases}
\mathrm{Find~}(\lambda,\bm{u})\in \mathbb{R}\times\bm{H}_0(\bm{\mathrm{curl}};\Omega)\cap \bm{H}(\mathrm{div}^0;\Omega)~\mathrm{with}~\bm{u}\ne 0,\\a(\bm{u},\bm{v})=\lambda b(\bm{u},\bm{v})\quad\forall ~\bm{v}\in\bm{H}_0(\bm{\mathrm{curl}};\Omega)\cap \bm{H}(\mathrm{div}^0;\Omega).
\end{cases}
\end{equation}
where $a(\bm{u},\bm{v}) := (\bm{\mathrm{curl}}\bm{u},\bm{\mathrm{curl}}\bm{v}$),
b($\bm{u},\bm{v}$) := ($\bm{u},\bm{v} $). We may define the norm $\|\cdot\|_{a}:=\sqrt{a(\cdot,\cdot)}$ in $\bm{H}_0(\bm{\mathrm{curl}};\Omega)\cap \bm{H}(\mathrm{div}^0;\Omega)$ and $\|\cdot\|_{b}:=\sqrt{b(\cdot,\cdot)}$ in $\bm{H}_0(\bm{\mathrm{curl}};\Omega)$. It is known that \eqref{eq:2_1} has a countable sequence of real eigenvalues \cite{MR1701792}. In this paper, we are interested in the case that the principal eigenvalue is simple, i.e.,
\begin{equation*}
    0 < \lambda_{1}< \lambda_{2}\leq\lambda_{3}\leq\cdots,
\end{equation*}
and corresponding eigenfunctions $ \bm{u}_1,\bm{u}_2,\bm{u}_3,\cdots$, which satisfy
\begin{equation*}
   a(\bm{u}_i,\bm{u}_j) = \lambda_ib(\bm{u}_i,\bm{u}_j)=\delta_{ij}\lambda_i.
\end{equation*}

\begin{remark}
In practical computations, it is typically observed that the principal eigenvalue of the Maxwell eigenvalue problem is simple unless the geometry or material coefficients exhibit special symmetries. Even a slight perturbation of these symmetries leads to a simple principal eigenvalue. For instance, in a rectangular cavity with three distinct side lengths, the principal eigenvalue turns out to be simple $($see  \cite{balanis2012advanced}$)$.
\end{remark}

To avoid the difficulty of enforcing the divergence-free condition, the following variational formulation is often used
\begin{equation}\label{eq:2_2}
\begin{cases}
\mathrm{Find~}(\lambda,\bm{u})\in \mathbb{R}\times\bm{H}_0(\bm{\mathrm{curl}};\Omega)~\mathrm{with}~\bm{u}\ne 0,\\a(\bm{u},\bm{v})=\lambda b(\bm{u},\bm{v})\quad\forall ~\bm{v}\in\bm{H}_0(\bm{\mathrm{curl}};\Omega).
\end{cases}
\end{equation}
Note that \eqref{eq:2_2} drops the divergence-free condition and the eigenvalue may equal to  zero (the corresponding eigenspace, i.e., $\bm{H}_0(\bm{\mathrm{curl}}^0;\Omega) = \nabla H_{0}^{1}(\Omega)$, is infinite-dimensional \cite[Theorem I.2.9]{MR851383}). Meanwhile, the eigenfunctions corresponding to $\lambda\ne 0$ remain unchanged.

 We now discuss the discretization of \eqref{eq:2_2} with the edge finite element method. Let $\{ \mathcal{T}_l, ~l=0, 1,$ $\ldots,L \} $ be a sequence of shape-regular and nested geometrically conforming simplicial triangulations, obtained by successively refining a sufficiently fine initial coarse mesh $\mathcal{T}_0$ using the longest edge bisection method. The initial mesh size is scaled such that $h_0 < 1$. For each level $l$, let $\mathcal{E}_l$ and $\mathcal{N}_l$ denote the sets of interior edges and interior nodes of $\mathcal{T}_l$, respectively. We denote by $\bm{b}_{l,E}$ the edge basis function associated with $E \in \mathcal{E}_l$ and by $\varphi_{l,p}$ the nodal basis function associated with $p \in \mathcal{N}_l$. The sets $\tilde{\mathcal{E}}_l$ and $\tilde{\mathcal{N}}_l$ involved in the local smoothing are defined as
\begin{align*}
    &\tilde{\mathcal{E}}_l=\left\{E\in\mathcal{E}_{l}:E\in\mathcal{E}_{l}\setminus\mathcal{E}_{l-1}\mathrm{~or~}E\in\mathcal{E}_{l-1}\mathrm{~but~supp}(\bm{b}_{l,E})\neq\mathrm{supp}(\bm{b}_{l-1,E})\right\},\\
    &\tilde{\mathcal{N}}_l=\left\{p\in\mathcal{N}_{l}:p\in\mathcal{N}_{l}\setminus\mathcal{N}_{l-1}\mathrm{~or~}p\in\mathcal{N}_{l-1}\mathrm{~but~supp}(\varphi_{l,p})\neq\mathrm{supp}(\varphi_{l-1,p})\right\}.
\end{align*}
For simplicity of notation, for $1\le l\le L$ we write
\begin{equation*}
    \bigcup_{i=1}^{N_l} \mathrm{span}\left\{\nabla \varphi_{l,i}\right\}=\bigcup_{p \in \tilde{\mathcal{N}}_l} \mathrm{span}\left\{\nabla \varphi_{l,p}\right\},~ \bigcup_{i=1}^{M_l} \mathrm{span}\left\{\bm{b}_{l,i}\right\}=\bigcup_{E \in \tilde{\mathcal{E}}_l} \mathrm{span}\left\{\bm{b}_{l,E}\right\},
\end{equation*}
where $N_l =\# \tilde{\mathcal{N}}_l$ and $M_l =\# \tilde{\mathcal{E}}_l$. We define the local subspace $W_{l,i}$ as
\begin{equation*}
    W_{l,i}=\begin{cases}\mathrm{span}\{\nabla\varphi_{l,i}\},&i=1,\ldots,N_l,\\
    \mathrm{span}\{\bm{b}_{l,j}\},&i=N_l+j, j=1,\ldots,M_l.\end{cases}
\end{equation*}

 On each mesh $\mathcal{T}_l$, we introduce the lowest order edge element space of the first family \cite{MR592160} and the corresponding discrete divergence-free space,
\begin{align*}
&V_l=\{\bm{u}_l\in \bm{H}_0(\bm{\mathrm{curl}};\Omega)\big|~ \bm{u}_l|_K=\bm{a}_K+\bm{b}_K\times \bm{x},~\forall~ K\in\mathcal{T}_l\},\\
&X_l=\{\bm{u}_l\in V_l\big|~ b(\bm{u}_l,\nabla p)=0, ~\forall~ p\in S_l\},
\end{align*}
where $S_l$ denotes the continuous piecewise linear polynomial space with vanishing trace on $\partial \Omega$ associated with $\mathcal{T}_l$. 

The nested structure of the meshes induces a sequence of nested edge element spaces: $V_0 \subset V_1 \subset V_2 \subset \cdots\subset V_L$. The finite element discretization of \eqref{eq:2_2} is
\begin{equation}\label{eq:2_3}
\begin{cases}
\mathrm{Find~}(\lambda_{l},\bm{u}_{l})\in \mathbb{R}\times V_l~\mathrm{with} ~\bm{u}_{l}\ne0,\\a(\bm{u}_{l},\bm{v}_l)=\lambda_{l}b(\bm{u}_{l},\bm{v}_l)\quad\forall ~\bm{v}_l\in V_l.
\end{cases}
\end{equation}
In this setting, the eigenspace corresponding to the zero eigenvalue coincides with $\nabla S_l$ (see \cite[Theorem 2.5]{MR1654571}). Moreover, for $\lambda_l \neq 0$, the corresponding eigenfunction $\bm{u}_l$ satisfies the discrete divergence-free constraint, i.e., $\bm{u}_l \in X_l$. We denote by $(\lambda_{i,l},\bm{u}_{i,l})$ the discrete eigenpairs corresponding to nonzero eigenvalues, which are ordered as
\begin{equation*}
    0 < \lambda_{1,l}< \lambda_{2,l}\leq\cdots\leq\lambda_{i,l}\leq\cdots\leq\lambda_{n_l,l},
\end{equation*}
since the principle eigenvalue is simple, in view of the a priori result (Theorem \ref{thm:2_1}), we know that the discrete principal eigenvalue is also simple.

For $0 \le l \le L$, we define the level $l$ operator $A_l : V_l \to V_l $ such that
\begin{equation*}
b(A_l\bm{w}_l,\bm{v}_l)=a(\bm{w}_l,\bm{v}_l)\quad\forall ~\bm{v}_l\in V_l.
\end{equation*}
We also define the local operator $A_{l,i}:W_{l,i}\to W_{l,i} ~(i = N_l+j,j=1,\ldots,M_l)$ such that
\begin{equation*}
b(A_{l,i}\bm{w}_{l,i},\bm{v}_{l,i})=a(\bm{w}_{l,i},\bm{v}_{l,i})\quad\forall ~\bm{v}_{l,i}\in W_{l,i}.
\end{equation*}
Denote by $\Omega_{l,i}$ the support of the basis function $\bm{b}_{l,i}$ and by $h_{l,i}$ the diameter of $\Omega_{l,i}$. Using the scaling argument and the Poincar\'e inequality, we may show that the minimum eigenvalue of $A_{l,i}$ satisfies
\begin{equation}\label{eq:2_4}
\lambda_{min}(A_{l,i})\ge O(h_{l,i}^{-2})\ge O(h_0^{-2}).
\end{equation}

It is known that the following spatial decomposition properties are valid
\begin{align}
    \bm{H}_{0}(\bm{\mathrm{curl}};\Omega)&=\nabla H_{0}^{1}(\Omega)\oplus W_1\oplus W_2,\label{de_1}\\
    V_{l}&=\nabla S_{l}\oplus W_1^l\oplus W_2^l,\label{de_2}
\end{align}
where $W_1 = \mathrm{span}\left \{  \bm{u}_1\right \} $, $W_1^l = \mathrm{span}\left \{\bm{u}_{1,l}  \right \}$ and the notation $\oplus$ denotes the $b(\cdot,\cdot)$-orthogonal (also $a(\cdot,\cdot)$-orthogonal) direct sum. Denote by $Q_l:V_L\to V_l$ the $b(\cdot,\cdot)$-orthogonal projection onto $V_l$, by $K_0^l:V_l\to \nabla S_{l}$, $Q_1^l:V_l\to W_1^l$ and $Q_2^l:V_l\to W_2^l$ the $b(\cdot,\cdot)$-orthogonal projections onto the respective subspaces of $V_l$, and by $Q_{l,i} :V_L\to W_{l,i}$ $(0\le l\le L,~N_l+1\le i\le N_l+M_l)$ the $b(\cdot,\cdot)$-orthogonal projection onto the discrete divergence-free local subspace. 

The following a priori error estimate, (see \cite[Theorem 3]{MR1804657} and \cite[Theorem 2]{MR3916956}), is useful in our convergence analysis. Throughout this paper, unless otherwise stated, the positive constant $C$ is independent of the number of mesh levels and the degrees of freedom in the finite element space.

\begin{theorem}\label{thm:2_1}
    Let $(\lambda_{1,l},\bm{u}_{1,l})$ $(0\le l\le L)$ be the first eigenpair of \eqref{eq:2_3} with $\|\bm{u}_{1,l}\|_{\bm{\mathrm{curl}}} = 1\  (\|\bm{u}_{1,l}\|_b = 1)$. Then there exists an eigenpair $(\lambda_{1},\bm{u}_{1})$ $(\bm{u}_{1} \in W_1)$ with $\|\bm{u}_1\|_{\bm{\mathrm{curl}}} = 1\ (\|\bm{u}_{1}\|_b = 1)$ such that
\begin{align*}
|\lambda_1-\lambda_{1,l}| \leq& ~C\delta_l^2(\lambda_1),  \\
  \|\bm{u}_1-\bm{u}_{1,l}\|_{\bm{\mathrm{curl}}}  \leq C\delta_l(\lambda_1) ~&(\|\bm{u}_1-\bm{u}_{1,l}\|_b \leq C\delta_l(\lambda_1)),
\end{align*}
where $\delta_l(\lambda_1)$ denotes the gap between $W_1$ and $W_1^l$, namely
\begin{equation*}
\delta_l\left(\lambda_1\right):=\sup_{\bm{w}\in W_1,\|\bm{w}\|_{\bm{\mathrm{curl}}}=1}\inf_{\bm{v}\in W_1^l}\left\|\bm{w}-\bm{v}\right\|_{\bm{\mathrm{curl}}}.
\end{equation*}
\end{theorem}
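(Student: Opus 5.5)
This is the standard a priori estimate for edge-element approximation of a simple Maxwell eigenvalue. I would prove it by recasting the discrete and continuous problems through solution operators, in the Babu\v{s}ka--Osborn / Boffi framework. The one nonstandard ingredient is the discrete compactness property of N\'ed\'elec spaces, which has to hold on the adaptive, nonuniform meshes considered here.

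\emph{Solution operators.} Let $T:\bm{L}^2\to \bm{H}_0(\bm{\mathrm{curl}};\Omega)\cap\bm{H}(\mathrm{div}^0;\Omega)$ be defined by
\[
a(T\bm f,\bm v)+b(\nabla p,\bm v)=b(\bm f,\bm v),\qquad b(T\bm f,\nabla q)=0,
\]
i.e.\ the mixed (Kikuchi) formulation. Let $T_l$ be its discrete analogue on $V_l\times S_l$. The nonzero eigenvalues of \eqref{eq:2_2}, respectively \eqref{eq:2_3}, are exactly the reciprocals of the nonzero eigenvalues of $T$, respectively $T_l$, with the same eigenfunctions. By the decompositions \eqref{de_1}--\eqref{de_2}, the kernel $\nabla S_l$ is split off. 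The problem is then reduced to approximating a compact self-adjoint operator on divergence-free fields, with $\lambda_1$ simple.

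\emph{Uniform convergence.} The key step is to show $\|T-T_l\|_{\mathcal L(X_l\cup X,\ \bm{H}(\bm{\mathrm{curl}}))}\to0$, or at least the collective compactness and pointwise convergence required by Boffi's theory. I expect this to be the main obstacle, for two reasons. First, $X_l\not\subset X$. Second, the usual discrete compactness argument relies on a discrete Friedrichs inequality and on $L^2$-estimates of $\bm u_l-H\bm u_l$, where $H$ is the Hodge projection onto $X$. Those estimates are usually derived with quasi-uniformity, inverse estimates and interpolation. On adaptive meshes I would instead use the commuting smoothed (Sch\"oberl / Christiansen--Winther) quasi-interpolants. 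These are $L^2$-stable and local, require only shape regularity, and yield
\[
\|\bm u_l-H\bm u_l\|_b\le C h_{\max}^{s}\|\bm{\mathrm{curl}}\,\bm u_l\|
\]
with $h_{\max}\le h_0$ and $s>0$ from the regularity of $\Omega$. This gives discrete compactness uniformly in $l$.

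\emph{Quantitative estimates.} Once convergence holds, the spectral projection $E_l$ onto $W_1^l$ converges to $E$, the projection onto $W_1$. The Babu\v{s}ka--Osborn estimates then give $\hat\delta(W_1,W_1^l)\le C\|(T-T_l)|_{W_1}\|$. The standard C\'ea-type argument for the mixed problem bounds this by the best-approximation error, which is $C\delta_l(\lambda_1)$ in the $\bm{\mathrm{curl}}$-norm; the Lagrange multiplier vanishes for divergence-free data. Since $\lambda_1$ is simple, I would choose the sign of $\bm u_1$ so that $b(\bm u_1,\bm u_{1,l})>0$. Normalizing then gives
\[
\|\bm u_1-\bm u_{1,l}\|_{\bm{\mathrm{curl}}}\le C\delta_l(\lambda_1).
\]
The $b$-version follows either because the $b$-norm is weaker, or by the normalization $\|\cdot\|_b=1$ and the equivalence $\|\cdot\|_{\bm{\mathrm{curl}}}^2=(1+\lambda)\|\cdot\|_b^2$ on eigenfunctions.

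\emph{Eigenvalue.} For the eigenvalue I would use the identity
\[
\lambda_{1,l}-\lambda_1=\frac{a(\bm u_1-\bm u_{1,l},\bm u_1-\bm u_{1,l})-\lambda_1 b(\bm u_1-\bm u_{1,l},\bm u_1-\bm u_{1,l})}{b(\bm u_{1,l},\bm u_{1,l})},
\]
which holds because $a(\bm u_1,\bm v)=\lambda_1 b(\bm u_1,\bm v)$ for all $\bm v\in\bm{H}_0(\bm{\mathrm{curl}};\Omega)\supset V_l$. This gives $|\lambda_1-\lambda_{1,l}|\le C\|\bm u_1-\bm u_{1,l}\|_{\bm{\mathrm{curl}}}^2\le C\delta_l^2(\lambda_1)$. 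Finally, simplicity of $\lambda_{1,l}$ follows from spectral convergence, since $\lambda_{2,l}\to\lambda_2>\lambda_1$ for $h_0$ small.
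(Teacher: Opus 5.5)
Your outline is correct and is essentially the argument behind this result: the paper gives no proof of its own but cites \cite[Theorem 3]{MR1804657} and \cite[Theorem 2]{MR3916956}, which rest on the same ingredients. These are the Kikuchi mixed formulation, uniform convergence of the discrete solution operators via the Hodge-projection estimate $\|\bm u_l-H\bm u_l\|_b\le Ch_{\max}^s\|\bm{\mathrm{curl}}\,\bm u_l\|$, Babu\v{s}ka--Osborn theory, and the eigenvalue identity you write; the Hodge estimate in fact needs only shape regularity even with the canonical interpolant, as in the paper's own use of it in \eqref{eq:4_12}. Two small adjustments: on adaptive meshes $h_{\max}$ need not tend to zero, so the key step should be the quantitative bound $\sup_l\|T-T_l\|\le Ch_0^s$, small for $h_0$ small, rather than $\|T-T_l\|\to 0$ or pointwise convergence; and since the paper's $\delta_l$ takes the infimum over $W_1^l\subset V_l$, your bound by the best-approximation error in $V_l$ gives the stated estimate a fortiori.
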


\begin{remark}
    The error estimate $\delta_l(\lambda_1) \le C h_0^s$ holds for all levels $0 \le l \le L$, with $1/2 < s \le 1$ (see \cite[Theorem 4.4]{MR1701792}).
\end{remark}

\section{The adaptive multilevel PHJD method}
\label{sec3}
In this section, we propose an adaptive multilevel PHJD method to solve the principal eigenpair of the Maxwell eigenvalue problem. Given the approximate solution $(\lambda_1^{L-1},\bm{u}_1^{L-1})$ at level $L-1$, we proceed to compute the eigenpair at level $L$ by employing the adaptive multilevel PHJD algorithm.

Denote by $\lambda^j$ the eigenvalue obtained at the $j$-th iteration of the adaptive multilevel PHJD algorithm. We introduce some simplified operators for the following discussion: $\tilde{A}_0^j = A_0-\lambda^jI, \tilde{A}_L^j = A_L-\lambda^jI, \tilde{A}_{l,i}^j = A_{l,i}-\lambda^jI~(l = 1,\ldots,L, i = N_l+1,\ldots,N_l+M_l)$, which are defined by
\begin{align*}
    b(\tilde{A}_0^j\bm{u}_0,\bm{v}_0) =& (\bm{u}_0,\bm{v}_0)_{E^j}\quad\forall \bm{u}_0,\bm{v}_0\in V_0, \\
        b(\tilde{A}_L^j\bm{u}_L,\bm{v}_L) =& (\bm{u}_L,\bm{v}_L)_{E^j}\quad\forall \bm{u}_L,\bm{v}_L\in V_L, \\
            b(\tilde{A}_{l,i}^j\bm{u}_{l,i},\bm{v}_{l,i}) =& (\bm{u}_{l,i},\bm{v}_{l,i})_{E^j}\quad\forall \bm{u}_{l,i},\bm{v}_{l,i}\in W_{l,i}, 
\end{align*}
where $(\cdot,\cdot)_{E^j}:= a(\cdot,\cdot)-\lambda^jb(\cdot,\cdot)$ and $I$ is the identity operator. Similarly, we can define $(\cdot,\cdot)_{E_L} := a(\cdot,\cdot)-\lambda_{1,L}b(\cdot,\cdot)$. These bilinear forms define inner products on the local subspaces $W_2^l$ and $W_{l,i}$ for $0\le l\le L$ and $N_l+1 \le i\le N_l+M_l$. The validity of the inner product on $W_2^l$ follows from the inequality $\lambda^j < \lambda_{2,L}$, while on $W_{l,i}$, it is ensured by choosing a sufficiently small $h_0$, as indicated in \eqref{eq:2_4}. We denote by $\|\cdot\|_{E^j}$ the norm induced by $(\cdot,\cdot)_{E^j}$ on $W_2^l$ and $W_{l,i}$.

\begin{remark}
 It is crucial to note that $\|\cdot\|_{E^j}$ is equivalent to $\|\cdot\|_{a}$ in $W_2^l$ and $W_{l,i}$, for $0\le l\le L$, $N_l+1 \le i\le N_l+M_l$. It suffices to provide the proof for $W_2^l$, as the argument for $W_{l,i}$ is analogous. First, we observe that $\|\bm{u}\|_{E^j} \le \|\bm{u}\|_a$ holds for all $\bm{u} \in W_2^l$. Conversely, we have
    $a(\bm{u},\bm{u}) = \|\bm{u}\|_{E^j}^2 + \lambda^j b(\bm{u},\bm{u}) \le \beta(\lambda_{2,l}) \|\bm{u}\|_{E^j}^2$,
where $\beta(\lambda) := 1 + \lambda^j(\lambda - \lambda^j)^{-1}$.
\end{remark}

\subsection{Local multilevel preconditioner}
To efficiently solve the Jacobi-Davidson correction equation in the adaptive multilevel PHJD method, we develop a local multilevel preconditioner.
\begin{definition}[Local multilevel preconditioner $B_L^j$]
    Given $\bm{g}\in V_L$, the operator $B_L^j:V_L\to V_L$ is defined by the following steps
    \vspace{-0.3cm}
    \begin{itemize}
        \item \textbf{Step 1.} Set $\bm{x}_0 = 0$.
        \item \textbf{Step 2.} For $l = 0$ to $L$, update the iterate by
        \[
            \bm{x}_{l+1} = \bm{x}_l + R_l^j Q_l\bigl(\bm{g} - \tilde{A}_L^j \bm{x}_l\bigr).
        \]
        \item \textbf{Step 3.} Set $B_L^j \bm{g} := \bm{x}_{L+1}$.
    \end{itemize}
\end{definition}
\vspace{-0.3cm}
For the smoother, we consider only the local edge smoother of Jacobi-type $R_l^j \colon V_l \to V_l$ which performs Jacobi relaxations only on the edges in $\tilde{\mathcal{E}}_l$.

\begin{equation}\label{eq:3_1}
        R_l^j\colon=
    \begin{cases}
    (\tilde{A}_0^j)^{-1}Q_2^0Q_0&\quad l=0,\\
    \gamma\sum_{i=N_l+1}^{N_l+M_l}(\tilde{A}_{l,i}^j)^{-1}Q_{l,i}&\quad1\le l\le L,
    \end{cases}
\end{equation}
where $\gamma $ is a suitably chosen positive scaling factor $(0 < \gamma  < 1)$.  For $l = 0$, the operator $Q_2^0$ is introduced to ensure that $(\tilde{A}_0^j )^{-1}$ is well-defined, owing to the fact that $\lambda^j<\lambda_{2,0}$.

\begin{remark}
Although we only analyze the local Jacobi-type smoother in the convergence analysis in this paper, numerical simulations and theoretical results may be extended to the local Gauss-Seidel smoother.
\end{remark}

Based on the multilevel framework described in \cite{MR1247694}, the error operator is defined as 
\[E_{L}^{j}=I-B_L^j\tilde{A}_L^j = (I-T_{L}^{j})\cdots(I-T_{1}^{j})(I-T_{0}^{j}),\] 
where $T_l^j = R_l^jQ_l\tilde{A}_L^j,~l = 0,\ldots,L$. 

Due to the fact that $R^j_l$ is symmetric with respect to $b(\cdot, \cdot)$, it is easy to see that $\tilde{E}_{L}^{j}:=(I-T_{0}^{j})(I-T_{1}^{j})\cdots(I-T_{L}^{j})$ is the adjoint of $E_L^j$ with respect to $(\cdot,\cdot)_{E^j}$. Further, it follows from $E_L^j = (I-T_L^j)E_{L-1}^j$ that
\begin{equation}\label{eq:3_2}
    I-E_L^j=\sum_{l=0}^LT_l^jE_{l-1}^j.
\end{equation}

\subsection{Algorithm}
We propose the following adaptive multilevel PHJD algorithm for the Maxwell eigenvalue problem.

\begin{algorithm}[H]
\caption{The adaptive multilevel PHJD algorithm}
\label{alg:1}
1. Initialization:
 let $ \lambda^0 = \lambda_1^{L-1},~\bm{u}^0 = \bm{u}_1^{L-1}$ and set $W^0 = \mathrm{span}\left \{ \bm{u}^0 \right \} $.

2. Solve the Helmholtz projection system to get $\tilde{\bm{u}}^1 = (I-K_0^L)\bm{u}^0$,

\quad \text{ }let $\bm{u}^1 = \tilde{\bm{u}}^1/\left \| \tilde{\bm{u}}^1 \right \| _b$, $\lambda^1 = Rq(\bm{u}^1) $ and set $W^1 = \mathrm{span}\left \{ \bm{u}^1 \right \}$.

3. Set $j =1$, $\bm{r}^j = \lambda^j\bm{u}^j-A_L\bm{u}^j$.

4. If $\|\bm{r}^j\|_b < \tol$, output $\lambda^j$ and $\bm{u}^j$; else, do the following:

(a) Solve preconditioned Jacobi-Davidson correction equation, i.e., 
\begin{equation*}
    \bm{e}^{j+1} = (I-Q_{U^j})B_L^j\bm{r}^j,
\end{equation*}
\quad \text{ }\text{ }where $Q_{U^j}: V_L \to U^j:= \mathrm{span}\left \{ \bm{u}^j \right \}$ is the $b(\cdot,\cdot)$-orthogonal projection.

(b) Solve the Helmholtz projection system to get $\bm{t}^{j+1} = (I-K_0^L)\bm{e}^{j+1}$.

(c) Solve the first eigenpair in $W^{j+1}$
\[
a(\bm{u}^{j+1},\bm{v}) = \lambda^{j+1}b(\bm{u}^{j+1},\bm{v})\quad\forall \ \bm{v}\in W^{j+1},~\|\bm{u}^{j+1}\|_b=1,
\]
\quad \text{ }\text{ }where $W^{j+1} = W^j+\mathrm{span}\left \{ \bm{t}^{j+1} \right \}$.

(d) Set $j= j+1$, $\bm{r}^j = \lambda^j\bm{u}^j-A_L\bm{u}^j$, go to step 4.
\end{algorithm}

\begin{remark}
    If the topology of either $\Omega$ or $\partial \Omega$ is more complex, the kernel of curl operator is spanned by grad $H_0^1(\Omega)$ together with a low-dimensional subspace of  harmonic vector fields. In this case, the algorithm can be modified following \cite{hiptmair2002multilevel} to handle the kernel of the curl operator. To demonstrate that our algorithm can be extended to computational domains or their boundaries with non-trivial topology, we present several numerical examples, see Example \ref{ex_4} and Example \ref{ex_6}.
\end{remark}

\section{Convergence analysis}
\label{sec4}
In this section, we derive the convergence of the proposed adaptive multilevel PHJD method for the Maxwell eigenvalue problem. We begin by stating the main convergence result.

\begin{theorem}\label{thm:4_1}
     For sufficiently small $h_0$ such that $0<\theta<1$, we have the following convergence estimates
    \begin{equation}\label{eq:4_1}
        \|\bm{e}_L^{j+1}\|_{E^j}\leq\begin{Bmatrix}1-\alpha^j(1-\theta)+Ch_0^s\end{Bmatrix}\|\bm{e}
        _L^j\|_{E^j},
    \end{equation}
    and
    \begin{equation}\label{eq:4_2}
        \lambda^{j+1}-\lambda_{1,L}\leq\left\{1-\alpha^j(1-\theta)+Ch_0^s\right\}^2(\lambda^j-\lambda_{1,L}),
    \end{equation}
    where $\bm{e}^{j}_L = -Q_2^L\bm{u}^j$, $0<\alpha^j<\frac{1}{1-\theta}$ and $s>\frac{1}{2}$. 
\end{theorem}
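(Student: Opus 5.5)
The plan is to follow the two-level PHJD analysis of \cite{2022A} and the adaptive elliptic analysis of \cite{Guo2025title}, with the domain-decomposition preconditioner replaced by the local multilevel operator $B_L^j$. There are three stages: a contraction estimate for the error operator $E_L^j$ restricted to $W_2^L$ in the $\|\cdot\|_{E^j}$ norm, a perturbation estimate relating the actual correction $\bm{t}^{j+1}$ to an ideal correction, and an eigenvalue estimate obtained from the Rayleigh quotient.

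\textbf{Stage 1: contraction of the multilevel error operator.} We first show that $\|E_L^j \bm{v}\|_{E^j}^2 \le \theta\|\bm{v}\|_{E^j}^2 + Ch_0^s\|\bm{v}\|_{E^j}^2$ for $\bm{v}\in W_2^L$, with $\theta<1$ independent of $L$ and the mesh sizes. The tool is the Xu--Zikatanov identity (or the framework of \cite{MR1247694}) applied to the successive subspace correction $I-E_L^j=\sum_l T_l^jE_{l-1}^j$ from \eqref{eq:3_2}. Two ingredients are needed.

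\emph{Stable decomposition.} Any $\bm{v}\in V_L$ is split as $\bm{v}=\bm{v}_0+\sum_{l\ge1}\sum_i\bm{v}_{l,i}$, with the $\nabla\varphi_{l,p}$ and $\bm{b}_{l,E}$ pieces bounded in the $a$ norm plus $h^{-2}$ weighted $L^2$ norms. We plan to use the regular (Hiptmair--Xu) decomposition on adaptively bisected meshes, $\bm{v}=\nabla\phi+\Pi^{\text{curl}}\bm{z}+\tilde{\bm{v}}$. The gradient and $H^1$ parts are then decomposed by the local multilevel (Scott--Zhang type) decompositions known for bisection grids.

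\emph{Strengthened Cauchy--Schwarz.} This follows from local support and the bounded overlap of the sets $\tilde{\mathcal{E}}_l$, as in the Poisson case.

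The shift $-\lambda^j b$ is indefinite on $\nabla S_L$ and on $W_1^L$, so the shifted form is not an inner product globally. We handle this by working on $W_2^L$. The gradient components are killed by the Helmholtz projection $K_0^L$ in step (b). The leakage into $W_1^L$, and the difference between $Q_2^L$ and the local projections $Q_{l,i}$, is controlled by the a priori estimate of Theorem \ref{thm:2_1}, namely $\delta_l(\lambda_1)\le Ch_0^s$, giving the $Ch_0^s$ term. The local positivity \eqref{eq:2_4} with $h_0$ small makes each $\tilde A^j_{l,i}$ invertible and uniformly comparable to $A_{l,i}$. By the norm equivalence in the Remark after the definition of $(\cdot,\cdot)_{E^j}$, we may pass between $\|\cdot\|_{E^j}$ and $\|\cdot\|_a$ at uniform cost.

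\textbf{Stage 2: error recursion.} Write $\bm{u}^j=\bm{u}_{1,L}\cos\phi+\bm{e}^j$-type components. The residual satisfies $\bm{r}^j=-\tilde A_L^j\bm{u}^j$, and on $W_2^L$ this equals $\tilde A_L^j \bm{e}^j_L$ up to the $W_1^L$ part, which vanishes modulo scaling. Hence $B_L^j\bm{r}^j=(I-E_L^j)\bm{e}_L^j+$ (components in $W_1^L$ and $\nabla S_L$). After the projection $(I-Q_{U^j})$ and the Helmholtz projection $(I-K_0^L)$, we obtain a vector whose $W_2^L$ component is $Q_2^L(I-E_L^j)\bm{e}_L^j$ plus $O(h_0^s)$ perturbations.

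Since $W^{j+1}$ contains $\bm{u}^j$ and $\bm{t}^{j+1}$, the Rayleigh--Ritz minimality gives a bound by the best combination $\bm{u}^j+\alpha^j\bm{t}^{j+1}$. Its $W_2^L$ error is $(1-\alpha^j)\bm{e}_L^j+\alpha^jE_L^j\bm{e}_L^j$ up to $O(h_0^s)$. Taking norms with Stage 1 gives a factor $1-\alpha^j+\alpha^j\theta+Ch_0^s$, which is \eqref{eq:4_1} with the stated range of $\alpha^j$.

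\textbf{Stage 3: eigenvalue estimate.} For $\bm{u}\in X_L$ with $\|\bm{u}\|_b=1$, we use the identity $\lambda-\lambda_{1,L}=\|Q_2^L\bm{u}\|_{E_L}^2$. We relate $\|\cdot\|_{E_L}$ to $\|\cdot\|_{E^j}$ through $|\lambda^j-\lambda_{1,L}|$, which is small and absorbed into $Ch_0^s$. Combining this with minimality of $\lambda^{j+1}$ over $W^{j+1}$ and squaring \eqref{eq:4_1} yields \eqref{eq:4_2}.

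The main obstacle is Stage 1: establishing a uniform stable decomposition for the shifted, indefinite-on-$V_L$ form with the local edge smoother only on $\tilde{\mathcal{E}}_l$, while keeping the $W_1^L$ and $\nabla S_L$ pollution at order $h_0^s$. I would first try proving the unshifted estimate, citing the known adaptive Hiptmair--Xu results, and then treating the shift $\lambda^j b$ as a compact perturbation bounded by $h_l^2$ on each level.
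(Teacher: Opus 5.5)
Your Stages 2 and 3 follow the paper's skeleton: the auxiliary iterate $\bm{u}^j+\alpha^j\bm{t}^{j+1}$, Rayleigh--Ritz minimality over $W^{j+1}$, and $\lambda-\lambda_{1,L}=\|Q_2^L\bm{u}\|_{E_L}^2$ on $X_L$. However, the step that actually produces the contraction factor is missing. Since $Q_2^LK_0^L=0$, the $W_2^L$ error of the auxiliary iterate is $(1-\alpha^j)\bm{e}_L^j+\alpha^jQ_2^LE_L^j\bm{e}_L^j$ plus a remainder. It is not $(1-\alpha^j)\bm{e}_L^j+\alpha^jE_L^j\bm{e}_L^j$: the fine-level edge corrections are not discretely divergence-free, so $E_L^j$ does not map $W_2^L$ into itself. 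For $\bm{v}\in W_2^L$, multilevel theory only bounds the indefinite quantity $(E_L^j\bm{v},E_L^j\bm{v})_{E^j}$ (this is what \eqref{eq:lem:4_5}--\eqref{eq:lem:4_6_2} give). By the $a$- and $b$-orthogonality of the discrete Helmholtz decomposition,
\[
\|Q_2^LE_L^j\bm{v}\|_{E^j}^2=(E_L^j\bm{v},E_L^j\bm{v})_{E^j}+(\lambda^j-\lambda_{1,L})\|Q_1^LE_L^j\bm{v}\|_b^2+\lambda^j\|K_0^LE_L^j\bm{v}\|_b^2 .
\]
The gradient part enters here with a plus sign. The Helmholtz projection in step (b) removes it from the new iterate, but it cannot remove it from this identity. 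So your claim that gradient components are ``killed by $K_0^L$'' is exactly where the argument breaks. A contraction of $(E_L^j\bm{v},E_L^j\bm{v})_{E^j}$ is compatible with a large $K_0^LE_L^j\bm{v}$ and no contraction of $Q_2^LE_L^j\bm{v}$ at all. The a priori bound $\delta_l(\lambda_1)\le Ch_0^s$ handles only the $W_1^L$ term, through $\lambda^j-\lambda_{1,L}\le Ch_0^{2s}$. You still need $\|K_0^LE_L^j\bm{v}\|_b\le Ch_0^s\|\bm{v}\|_{E^j}$.

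That leakage bound is the paper's key estimate \eqref{eq:4_14}, and it is proved as follows.
\begin{itemize}
\item Since $\bm{v}$ is $b$-orthogonal to $\nabla S_L$, $\|K_0^LE_L^j\bm{v}\|_b^2=-b(K_0^LE_L^j\bm{v},(I-E_L^j)\bm{v})$, and \eqref{eq:3_2} splits $(I-E_L^j)\bm{v}$ into $T_0^j\bm{v}$ and $\sum_{l\ge1}T_l^jE_{l-1}^j\bm{v}$.
\item The coarse correction lies in $W_2^0$ and is discretely divergence-free on $\mathcal{T}_0$. By Monk's Lemma 7.6 it is within $Ch_0^s\|\bm{\mathrm{curl}}\,T_0^j\bm{v}\|_b$ in $L^2$ of an exactly divergence-free field, which is $b$-orthogonal to $\nabla S_L$. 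This is where $h_0^s$ enters.
\item On the graded fine levels only the local Poincar\'e bound \eqref{eq:2_4} is used, i.e.\ $\|\cdot\|_b\le Ch_0\|\cdot\|_{E^j}$ on each edge space. Combined with \eqref{eq:lem:4_6_2}, this gives \eqref{eq:lem4_7}.
\end{itemize}

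Your stable decomposition is also built for a hybrid smoother, which $B_L^j$ is not. $B_L^j$ has no $\nabla\varphi_{l,p}$ subspaces, since the shifted form is negative definite there. It has only edge Jacobi on $\tilde{\mathcal{E}}_l$ and a coarse solve on $W_2^0$. No decomposition of a general $\bm{v}\in V_L$ into these spaces can be stable: a discrete gradient has zero curl, while every edge piece carries energy of order $h^{-2}\|\cdot\|_b^2$. The lower bound for $T^j$ therefore holds only on $W_2^L$. Projecting the coarse part onto $W_2^0$ costs the factor $(1-C\delta_0(\lambda_1))^2$ in \eqref{eq:lem:4_5}.

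Finally, your remainder terms must be $O(h_0^s)\|\bm{e}_L^j\|_{E^j}$, not just $O(h_0^s)$. The paper gets this from $\|\bm{e}_L^j\|_{E^j}=\sqrt{\lambda^j-\lambda_{1,L}}\,\|Q_1^L\bm{u}^j\|_b$ together with the inductive bound $\lambda^j-\lambda_{1,L}\le Ch_0^{2s}$.
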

\begin{remark}
    In Theorem \ref{thm:4_1}, we choose sufficiently small $h_0$ to ensure that $0<\theta<1$. In fact, 
    \begin{equation*}
        \theta = \sqrt{1-\frac{(2-\omega)\left(1-C \delta_0(\lambda_1)\right)^2}{C}+h_0^{2s}}\to \sqrt{1-\frac{(2-\omega)}{C}}\in(0,1), as~h_0\to 0.
    \end{equation*}
    where $\omega \in(0,2)$ depends on the positive scaling factor of Jacobi smoother and the shape regularity of the meshes.
\end{remark}

\subsection{A rigorous proof of the convergence result.}
In this subsection, we present a rigorous proof of the convergence result. The key idea is to introduce an auxiliary error operator and decompose it into a principal error term and a remainder term. Each term will be estimated individually, and the convergence result follows from the relation between the auxiliary and actual error terms.

To construct the auxiliary error operator, we introduce an auxiliary function defined by
\begin{equation*}
    \tilde{\bm{u}}^{j+1}:=\bm{u}^j+\alpha^j\bm{t}_L^{j+1}\in U^j+\mathrm{span}\{\bm{t}_L^{j+1}\}\subset W^{j+1},
\end{equation*}
where $\alpha_j$ is an undetermined parameter. Using the definition of 
$\bm{t}_L^{j+1}$ in Algorithm \ref{alg:1}, it follows that
\begin{equation}\label{eq:4_7}
\tilde{\bm{u}}^{j+1}=\bm{u}^j+\alpha^j(I-K_0^L)(I-Q_{U^j})B_L^j\bm{r}^j.
\end{equation}

Combining \eqref{eq:4_7}, the definition of residual vector $\bm{r}^j$ and the partitions of the identity operator in $V_L$, we obtain
\begin{align}\label{eq:4_8}
\begin{aligned}
       \tilde{\bm{e}}_L^{j+1} &= -Q_2^L\tilde{\bm{u}}^{j+1}\\
     &=  \bm{e}_L^{j}-\alpha^jQ_2^LB_L^j\bm{r}^j+\alpha^jQ_2^LQ_{U^j}B_L^j\bm{r}^j\\
     &= \bm{e}_{L}^{j}-\alpha^{j}Q_{2}^{L}B_{L}^{j}(\lambda^{j}I-A_{L})(K_0^L\bm{u}^{j}+Q_{1}^{L}\bm{u}^{j}-\bm{e}_{L}^{j})+\alpha^{j}Q_{2}^{L}Q_{U^{j}}B_{L}^{j}\bm{r}^{j}\\
     &=  \{\bm{e}_{L}^{j}+\alpha^{j}Q_{2}^{L}B_{L}^{j}(\lambda^{j}I-A_{L})\bm{e}_{L}^{j}\}+\alpha^{j}\{Q_{2}^{L}B_{L}^{j}(A_{L}-\lambda^{j}I)Q_{1}^{L}\bm{u}^{j}\\
     & \quad +Q_{2}^{L}Q_{Uj}B_{L}^{j}\bm{r}^{j}\}\\
     &=: I_{1}^{j}+I_{2}^{j}, 
\end{aligned}
\end{align}
where $I_{1}^{j}$ is the principal error term and $I_{2}^{j}$ represents the remainder term.

For the estimate of $I_{1}^{j}$, we define the principal error operator $G_L^j:W_2^L\to W_2^L$ as follows
\begin{equation*}
    G_L^j:=I+\alpha^jQ_2^LB_L^j(\lambda^jI-A_L)=I-\alpha^jQ_2^LB_L^j\tilde{A}_L^j.
\end{equation*}

An upper bound for the norm of principal error operator is given in the following lemma.
\begin{lemma}\label{lem:4_9}
    Under the same assumption as in Theorem \ref{thm:4_1}, it holds that
    \begin{equation*}
        \|G_L^j\bm{v}\|_{E^j}\leq\begin{pmatrix}1-\alpha^j(1-\theta)\end{pmatrix}\|\bm{v}\|_{E^j}\quad\forall \bm{v}\in W_2^L,
    \end{equation*}
    where the coefficient $\alpha^j$ satisfies $0<\alpha^j<\frac{1}{1-\theta}$.
\end{lemma}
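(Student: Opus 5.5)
The plan is to reduce the estimate for $G_L^j$ to a bound on the error operator $E_L^j=I-B_L^j\tilde{A}_L^j$ of the local multilevel iteration, measured in $\|\cdot\|_{E^j}$ and restricted to $W_2^L$. For $\bm{v}\in W_2^L$ we write
\begin{equation*}
G_L^j\bm{v}=(1-\alpha^j)\bm{v}+\alpha^j Q_2^L(I-B_L^j\tilde{A}_L^j)\bm{v}=(1-\alpha^j)\bm{v}+\alpha^j Q_2^L E_L^j\bm{v}.
\end{equation*}
We may assume $0<\alpha^j\le1$, or more generally handle $|1-\alpha^j|$ in the same way. Since $Q_2^L$ is both $a$- and $b$-orthogonal, it is an $(\cdot,\cdot)_{E^j}$-projection. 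Hence $\|Q_2^L\bm{w}\|_{E^j}^2\le\|\bm{w}\|_{E^j}^2$ as long as the $W_1^L$ component carries nonnegative energy, and it suffices to prove
\begin{equation*}
\|Q_2^LE_L^j\bm{v}\|_{E^j}\le\theta\|\bm{v}\|_{E^j}\qquad\forall\,\bm{v}\in W_2^L.
\end{equation*}
The triangle inequality then gives the factor $1-\alpha^j+\alpha^j\theta=1-\alpha^j(1-\theta)$. The delicate point is that $(\cdot,\cdot)_{E^j}$ is indefinite on $W_1^L$ and on $\nabla S_L$, since $\lambda^j\ge\lambda_{1,L}$. I would therefore work with $Q_2^L$ applied before taking norms, and use that $\tilde{A}_L^j$ commutes with $Q_2^L$.

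To bound $E_L^j$ I would follow the standard successive subspace correction framework of \cite{MR1247694}, in the form of the identity
\begin{equation*}
\|\bm{v}\|_{E^j}^2-\|E_L^j\bm{v}\|_{E^j}^2=\sum_{l=0}^L\bigl((2I-T_l^j)T_l^jE_{l-1}^j\bm{v},E_{l-1}^j\bm{v}\bigr)_{E^j},
\end{equation*}
which follows from \eqref{eq:3_2} and the $(\cdot,\cdot)_{E^j}$-symmetry of $T_l^j$. This requires two ingredients. The first is smoothing: by \eqref{eq:2_4}, $(\cdot,\cdot)_{E^j}$ is positive on each $W_{l,i}$, and with $\gamma$ small and the finite overlap from shape regularity we get $\omega<2$ such that $(T_l^j\bm{w},T_l^j\bm{w})_{E^j}\le\omega(T_l^j\bm{w},\bm{w})_{E^j}$. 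On level $0$ the inverse is exact on $W_2^0$. The second is stability: for $\bm{v}\in W_2^L$ we need a decomposition $\bm{v}=\sum_{l,i}\bm{v}_{l,i}$ with $\sum\|\bm{v}_{l,i}\|_{E^j}^2\le C\|\bm{v}\|_{E^j}^2$.

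For the stability I would use the local Hiptmair--Xu type decomposition on adaptive meshes. This splits $\bm{v}$ into a coarse part in $V_0$ plus local edge parts and local gradient parts $\nabla\varphi_{l,p}$, using a Scott--Zhang-type quasi-interpolation and a regular decomposition. The norm equivalence of the Remark converts $\|\cdot\|_a$ bounds into $\|\cdot\|_{E^j}$ bounds on $W_2^l$ and $W_{l,i}$. The main obstacle is the coarse part: $R_0^j$ only acts on $Q_2^0$, so the coarse component has to be split as $Q_2^0\bm{v}_0+Q_1^0\bm{v}_0+K_0^0\bm{v}_0$. The gradient part is harmless after $Q_2^L$ is applied, since $Q_2^L\nabla S_L=0$. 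The $Q_1^0$ component must be shown to be small, using Theorem \ref{thm:2_1}. Because $\bm{v}\perp\bm{u}_{1,L}$ and $\|\bm{u}_{1,0}-\bm{u}_{1,L}\|\le C\delta_0(\lambda_1)\le Ch_0^s$, we get $\|Q_1^0\bm{v}_0\|\le Ch_0^s\|\bm{v}\|_{E^j}$.

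These perturbations give the modified contraction $\|Q_2^LE_L^j\bm{v}\|_{E^j}^2\le\bigl(1-(2-\omega)(1-C\delta_0(\lambda_1))^2/C+h_0^{2s}\bigr)\|\bm{v}\|_{E^j}^2=\theta^2\|\bm{v}\|_{E^j}^2$, which matches the stated $\theta$. For $h_0$ small we have $\theta<1$, and this completes the lemma. The indefinite contributions on $W_1^L$ that arise in the telescoping identity are controlled in the same way, by $h_0^{2s}$ terms from Theorem \ref{thm:2_1}.
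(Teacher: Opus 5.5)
Your reduction to $\|Q_2^LE_L^j\bm v\|_{E^j}\le\theta\|\bm v\|_{E^j}$ matches the paper. So does your successive subspace correction bound for $(E_L^j\bm v,E_L^j\bm v)_{E^j}$: the smoothing property, the stable decomposition on $W_2^L$, and the $(1-C\delta_0(\lambda_1))^2$ loss from the coarse $Q_1^0$ part are the paper's derivation of \eqref{eq:4_11}.

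The gap is the step from that bound to a bound on $Q_2^LE_L^j\bm v$. Write $\bm w=E_L^j\bm v$. The discrete Helmholtz decomposition gives
\begin{equation*}
\|Q_2^L\bm w\|_{E^j}^2=(\bm w,\bm w)_{E^j}+(\lambda^j-\lambda_{1,L})\|Q_1^L\bm w\|_b^2+\lambda^j\|K_0^L\bm w\|_b^2 .
\end{equation*}
The telescoping identity controls only the first term, and that term is not a norm on $V_L$. You account for the $W_1^L$ term: it is $O(h_0^{2s})$ by $\lambda^j-\lambda_{1,L}\le Ch_0^{2s}$ and $\|E_L^j\bm v\|_b\le C\|\bm v\|_{E^j}$. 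The gradient term, however, is never bounded, and nothing in Theorem~\ref{thm:2_1} makes it small.

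This term is genuinely present. The local edge corrections $T_l^jE_{l-1}^j\bm v$, $l\ge1$, are not discretely divergence-free. Also $T_0^j\bm v\in W_2^0$ is discretely divergence-free only with respect to $S_0$. Hence $E_L^j\bm v\notin X_L$. On $\nabla S_L$ we have $(\cdot,\cdot)_{E^j}=-\lambda^jb(\cdot,\cdot)$, so an $O(1)$ gradient component would make $(\bm w,\bm w)_{E^j}\le\theta^2\|\bm v\|_{E^j}^2$ compatible with $\|Q_2^L\bm w\|_{E^j}\ge\|\bm v\|_{E^j}$.

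Your two remarks on gradients do not address this. That $Q_2^L\nabla S_L=0$ only shows the gradient pieces of the stable decomposition drop out of $(\bm v,\cdot)_{E^j}$, which concerns the input, not the output. That $\tilde A_L^j$ commutes with $Q_2^L$ does not help, because $B_L^j$ does not commute with $Q_2^L$.

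The missing ingredient is $\|K_0^LE_L^j\bm v\|_b\le Ch_0^s\|\bm v\|_{E^j}$. This is the Maxwell-specific core of the paper's proof and the actual source of the $h_0^{2s}$ in $\theta$. The paper's argument runs as follows.
\begin{itemize}
\item Since $\bm v\perp\nabla S_L$, one writes $\|K_0^L\bm w\|_b^2=b(K_0^L\bm w,\bm w-\bm v)$. By \eqref{eq:3_2}, $\bm w-\bm v=-T_0^j\bm v-\sum_{l\ge1}T_l^jE_{l-1}^j\bm v$.
\item The coarse correction is replaced by its exactly divergence-free Helmholtz component $\bm w_0$, which satisfies $\bm w_0\perp\nabla H_0^1(\Omega)\supset\nabla S_L$. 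The cost is $Ch_0^s\|\bm{\mathrm{curl}}\,T_0^j\bm v\|_b$, by \eqref{eq:4_12} (approximation of discretely divergence-free fields by exactly divergence-free ones). The bound $\|T_0^j\bm v\|_{E^j}\le\|\bm v\|_{E^j}$ comes from \eqref{eq:4_13}--\eqref{eq:4_20}.
\item The fine-level corrections satisfy \eqref{eq:lem4_7}. Each lives in a local edge space with $\lambda_{\min}(A_{l,i})\ge Ch_0^{-2}$ by \eqref{eq:2_4}, so their $L^2$-norm is $O(h_0)$ times the energy controlled by \eqref{eq:lem:4_6_2}.
\end{itemize}
Without this estimate your argument does not close.

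Separately, for $1<\alpha^j<1/(1-\theta)$ the triangle inequality gives $|1-\alpha^j|+\alpha^j\theta=\alpha^j(1+\theta)-1$, not $1-\alpha^j(1-\theta)$. Treating $|1-\alpha^j|$ ``in the same way'' therefore does not yield the stated factor; the bound as written needs $\alpha^j\le1$.
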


\begin{proof}
By applying the definitions of $G_L^j$ and $E_L^j$ together with the triangle inequality, we have
    \begin{equation}\label{eq:4_9}
        \|G_L^j\bm{v}\|_{E^j}=\|(I-\alpha^jQ_2^L)\bm{v}+\alpha^jQ_2^LE_L^j\bm{v}\|_{E^j}\leq(1-\alpha^j)\|\bm{v}\|_{E^j}+\alpha^j\|Q_2^LE_L^j\bm{v}\|_{E^j}.
    \end{equation}
    To bound $\|Q_2^LE_L^j\bm{v}\|_{E^j}$, by the discrete Helmholtz decomposition and the definition of $(\cdot,\cdot)_{E^j}$, we obtain
    \begin{align}\label{eq:4_10}
(Q_{2}^{L}E_{L}^{j}\bm{v},Q_{2}^{L}E_{L}^{j}\bm{v})_{E^{j}}& =(E_{L}^{j}\bm{v},E_{L}^{j}\bm{v})_{E^{j}}-(Q_{1}^{L}E_{L}^{j}\bm{v},Q_{1}^{L}E_{L}^{j}\bm{v})_{E^{j}}\nonumber\\&\quad-(K_{0}^{L}E_{L}^{j}\bm{v},K_{0}^{L}E_{L}^{j}\bm{v})_{E^{j}} \nonumber\\
& \le (E_{L}^{j}\bm{v},E_{L}^{j}\bm{v})_{E^{j}}+(\lambda^{j}-\lambda_{1,L})b(E_{L}^{j}\bm{v},E_{L}^{j}\bm{v})\nonumber\\&\quad+\lambda^{j}b(K_{0}^{L}E_{L}^{j}\bm{v},K_{0}^{L}E_{L}^{j}\bm{v}).
\end{align}
We begin by estimating the first term in \eqref{eq:4_10}. Let $T^j = \sum_{l = 0}^{L} T_l^j$. Following the argument of Lemma 5 in \cite{Guo2025title}, we derive the following lower bound of $T^j$ in $W_2^L$
\begin{equation}\label{eq:lem:4_5}
            \|\bm{v}\|_{E^j}^2\leq\frac{C}{\left(1-C\delta_0(\lambda_1)\right)^2}(T^j\bm{v},\bm{v})_{E^j}\quad\forall \bm{v}\in W_2^L.
\end{equation}
Moreover, the classical theory of multilevel methods \cite{MR1247694} implies the following estimates
\begin{equation}\label{eq:lem:4_6_1}
    (T^j\bm{v}_L,\bm{v}_L)_{E^j}\leq C\sum_{l=0}^L(T_l^jE_{l-1}^j\bm{v}_L,E_{l-1}^j\bm{v}_L)_{E^j}\quad\forall \bm{v}_L\in V_L,
\end{equation}
and
\begin{equation}\label{eq:lem:4_6_2}
    (2-\omega)\sum_{l=0}^{L}(T_{l}^{j}E_{l-1}^{j}\bm{v}_{L},E_{l-1}^{j}\bm{v}_{L})_{E^{j}}\leq(\bm{v}_{L},\bm{v}_{L})_{E^{j}}-(E_{L}^{j}\bm{v}_{L},E_{L}^{j}\bm{v}_{L})_{E^{j}}\quad\forall \bm{v}_{L}\in V_{L},
\end{equation}
where $\omega= max_{l=0,\ldots,L} ~\omega_l~ (0 < \omega_l < 2)$ is determined by the damping parameter of the Jacobi smoother and the shape regularity of the meshes.
Combining \eqref{eq:lem:4_5}, \eqref{eq:lem:4_6_1}, and \eqref{eq:lem:4_6_2}, we obtain the estimate
\begin{equation}\label{eq:4_11}
    (E_L^j\bm{v},E_L^j\bm{v})_{E^j}\leq\left\{1-\frac{(2-\omega)\left(1-C_2 \delta_0(\lambda_1)\right)^2}{C_1}\right\}\|\bm{v}\|_{E^j}^2.
\end{equation}

Next, the second term in \eqref{eq:4_10} can be bounded using the estimates $\lambda^{j}- \lambda_{1,L} \leq Ch_0^{2s}$ and $\|E_L^j\bm{v}\|_b \leq C\|\bm{v}\|_a \leq C\|\bm{v}\|_{E^j}$. The first inequality follows from mathematical induction and Lemma~4.3 in \cite{2022A}, while the second is derived using the Poincar\'e inequality and arguments similar to Lemma 8 in \cite{Guo2025title}.

Then we estimate the remaining term in \eqref{eq:4_10}. Applying \eqref{eq:3_2}, we get
\begin{align}\label{eq:4_18}
    b(K_{0}^{L}E_{L}^{j}\bm{v},&K_{0}^{L}E_{L}^{j}\bm{v}) = ~b(K_{0}^{L}E_{L}^{j}\bm{v},E_{L}^{j}\bm{v})=b(K_{0}^{L}E_{L}^{j}\bm{v},E_{L}^{j}\bm{v}-\bm{v})\nonumber\\
    =&-b(K_0^LE_L^j\bm{v},T_0^j\bm{v})-b(K_0^LE_L^j\bm{v},\sum_{l=1}^LT_l^jE_{l-1}^j\bm{v})).
    \end{align}
It follows from the definition of $R_0^j$ in \eqref{eq:3_1} that $T_0^j\bm{v}$ is discretely divergence-free. Let $\bm{w}_0$ denote the divergence-free component of $T_0^j\bm{v}$ in the Helmholtz decomposition in $\bm{H}_0(\bm{\mathrm{curl}};\Omega)$. Then, by Lemma 7.6 in \cite{monk2003finite}, we have
   \begin{equation}\label{eq:4_12}
           \|\bm{w}_0-T_0^j\bm{v}\|_b \le Ch_0^s\|\bm{\mathrm{curl}} T_0^j\bm{v}\|_b,
   \end{equation}
   where C is a constant independent of the mesh size and $\frac{1}{2}<s\le 1$. 
    Since $\bm{w}_0\in W_1 \oplus W_2$, combining \eqref{eq:4_18} and \eqref{eq:4_12} we obtain
    \begin{align}\label{eq:4_19}
b(K_{0}^{L}E_{L}^{j}\bm{v},K_{0}^{L}E_{L}^{j}\bm{v})= &-b(K_0^LE_L^j\bm{v},T_0^j\bm{v}-\bm{w}_0)-b(K_0^LE_L^j\bm{v},\sum_{l=1}^LT_l^jE_{l-1}^j\bm{v}),\nonumber\\
        \le &~~\|K_0^LE_L^j\bm{v}\|_b(Ch_0^{s}\|T_0^j\bm{v}\|_{E^j}+\|\sum_{l=1}^LT_l^jE_{l-1}^j\bm{v}\|_b).
\end{align}
Since $R_0^j$ is symmetric with respect to $b(\cdot, \cdot)$, we have 
\begin{equation}\label{eq:4_13}
    (T_0^j\bm{v}_L,\bm{\omega}_2^0)_{E^j} = (\bm{v}_L,\bm{\omega}_2^0)_{E^j}~~~\forall \bm{v}_L\in V_L,\bm{\omega}_2^0\in W_2^0.
\end{equation}
It follows from \eqref{eq:4_13} that
\begin{align}\label{eq:4_20}
    \|T_0^j\bm{v}\|_{E^j}^2 = (T_0^j\bm{v},\bm{v})_{E^j} \le \|T_0^j\bm{v}\|_{E^j}\|\bm{v}\|_{E^j},
\end{align}
To bound the second term in \eqref{eq:4_19}, we employ \eqref{eq:lem:4_6_2} and the Poincar\'e inequality on the local discrete divergence-free subspace. This yields the estimate 
\begin{equation}\label{eq:lem4_7}
     \|\sum_{l=1}^LT_l^jE_{l-1}^j\bm{v}\|_b\le Ch_0((\bm{v},\bm{v})_{E^j}+\lambda^jb(E_L^j\bm{v},E_L^j\bm{v}))^{\frac{1}{2}}\quad \forall \bm{v}\in W_1^L\oplus W_2^L.
\end{equation}
Using the estimate $\|E_L^j\bm{v}\|_b \leq C\|\bm{v}\|_{E^j}$ , together with \eqref{eq:4_19}, \eqref{eq:4_20} and \eqref{eq:lem4_7}, we arrive at
\begin{align}\label{eq:4_14}
        \|K_0^LE_L^j\bm{v}\|_b\le& ~Ch_0^{s}\|\bm{v}\|_{E^j}+Ch_0(\|\bm{v}\|_{E^j}^2+\lambda^j\|E_L^j\bm{v}\|_b^2)^{\frac{1}{2}}
        \le~ Ch_0^{s}\|\bm{v}\|_{E^j}.
\end{align}

Finally, combining \eqref{eq:4_10}, \eqref{eq:4_11} and \eqref{eq:4_14}, we obtain
\begin{equation}\label{eq:4_15}
    \|Q_2^LE_L^j\bm{v}\|_{E^j}\le \sqrt{1-\frac{(2-\omega)\left(1-C \delta_0(\lambda_1)\right)^2}{C}+Ch_0^{2s}}\|\bm{v}\|_{E^j}.
\end{equation}
Let $\theta = \sqrt{1-\frac{(2-\omega)\left(1-C \delta_0(\lambda_1)\right)^2}{C}+Ch_0^{2s}}$. Substituting \eqref{eq:4_15} into \eqref{eq:4_9} completes the proof of this lemma.
\end{proof}

To estimate $I_2^j$, we present the following lemma
\begin{lemma}\label{lem:4_10}
    Under the same assumption as in Theorem \ref{thm:4_1}, it holds that
    \begin{equation*}
        \|I_{2}^j\|_{E^j}\le Ch_0^s\|\bm{e}_L^j\|_{E^j}.
    \end{equation*}
\end{lemma}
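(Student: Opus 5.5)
We need to bound $I_2^j=\alpha^j\{Q_2^LB_L^j(A_L-\lambda^jI)Q_1^L\bm{u}^j+Q_2^LQ_{U^j}B_L^j\bm{r}^j\}$. Since $\alpha^j<\frac{1}{1-\theta}$ is bounded, it suffices to show that each of the two terms is at most $Ch_0^s\|\bm{e}_L^j\|_{E^j}$ in the $\|\cdot\|_{E^j}$-norm. Throughout we use three facts: $\bm{u}^j$ is $b$-normalized and discretely divergence-free (it lies in $W^j\subset X_L$ because of the Helmholtz projections), so $\bm{u}^j=Q_1^L\bm{u}^j-\bm{e}_L^j$; the eigenvalue error satisfies $\lambda^j-\lambda_{1,L}\le C\|\bm{e}_L^j\|_{E^j}^2$ (the Rayleigh quotient identity in $W_1^L\oplus W_2^L$); and $\|\bm{e}_L^j\|_b\le C\|\bm{e}_L^j\|_{E^j}$ on $W_2^L$. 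We also use $\lambda^j-\lambda_{1,L}\le Ch_0^{2s}$, already invoked in Lemma \ref{lem:4_9}.

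\textbf{First term.} Since $Q_1^L\bm{u}^j$ is an eigenfunction, $(A_L-\lambda^jI)Q_1^L\bm{u}^j=(\lambda_{1,L}-\lambda^j)Q_1^L\bm{u}^j$, and its size is $|\lambda^j-\lambda_{1,L}|\,\|Q_1^L\bm{u}^j\|_b\le C\|\bm{e}_L^j\|_{E^j}^2\le Ch_0^s\|\bm{e}_L^j\|_{E^j}$. What remains is a uniform bound on $\|Q_2^LB_L^j\bm{g}\|_{E^j}$ in terms of $\|\bm{g}\|_b$. We will get this by writing $B_L^j\bm{g}=\sum_l R_l^jQ_l(\bm{g}-\tilde{A}_L^j\bm{x}_l)$ and bounding each smoother: $(\tilde A^j_{l,i})^{-1}$ has norm $O(h_{l,i}^2)$ by \eqref{eq:2_4}, and $(\tilde{A}_0^j)^{-1}Q_2^0$ is bounded by $(\lambda_{2,0}-\lambda^j)^{-1}$. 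Alternatively we can write $B_L^j=(I-E_L^j)(\tilde A_L^j)^{-1}$ on the range where this makes sense. This is the same kind of argument used earlier for $\|E_L^j\bm{v}\|_b\le C\|\bm{v}\|_{E^j}$. The resulting factor $h_0^s$ then comes from $\lambda^j-\lambda_{1,L}\le Ch_0^{2s}$ combined with $\|\bm{e}_L^j\|_{E^j}^2$.

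\textbf{Second term.} We have $Q_2^LQ_{U^j}B_L^j\bm{r}^j=b(B_L^j\bm{r}^j,\bm{u}^j)\,Q_2^L\bm{u}^j=-b(B_L^j\bm{r}^j,\bm{u}^j)\,\bm{e}_L^j$. So it suffices to show $|b(B_L^j\bm{r}^j,\bm{u}^j)|\le Ch_0^s$. Split $\bm{u}^j=Q_1^L\bm{u}^j-\bm{e}_L^j$ and expand $\bm{r}^j=-\tilde A_L^j\bm{u}^j=(\lambda^j-\lambda_{1,L})Q_1^L\bm{u}^j+\tilde{A}_L^j\bm{e}_L^j$. This produces:
\begin{itemize}
\item terms carrying a factor $(\lambda^j-\lambda_{1,L})\le Ch_0^{2s}$, which are harmless by the boundedness of $B_L^j$ from the first step;
\item the term $b(B_L^j\tilde A_L^j\bm{e}_L^j,\bm{e}_L^j)=b((I-E_L^j)\bm{e}_L^j,\bm{e}_L^j)$, which is bounded by $C\|\bm{e}_L^j\|_{E^j}^2\le C(\lambda^j-\lambda_{1,L})\le Ch_0^{2s}$;
\item the cross term $b((I-E_L^j)\bm{e}_L^j,Q_1^L\bm{u}^j)$.
\end{itemize}

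\textbf{Main obstacle: the cross term.} Its natural bound is only $C\|\bm{e}_L^j\|_b$, which is small (of order $h_0^s$) but this smallness has to be justified. We plan to use the symmetry of $B_L^j$ in $b(\cdot,\cdot)$, which gives $b((I-E_L^j)\bm{e}_L^j,\bm{u}_{1,L})=(\bm{e}_L^j,B_L^j\tilde A_L^j\bm{u}_{1,L})_{\ldots}$-type identities. Then:
\begin{itemize}
\item The level-$0$ piece is handled by $b$-orthogonality of $W_2^0$ against the coarse eigenfunction, together with the a priori estimate $\|\bm{u}_{1,0}-\bm{u}_{1,L}\|_b\le C\delta_0(\lambda_1)\le Ch_0^s$ from Theorem \ref{thm:2_1}.
\item The local pieces $l\ge1$ are handled by the local Poincar\'e estimate \eqref{eq:lem4_7}, which gives a factor $h_0$.
\end{itemize}
If this route fails, the fallback is the bound $\|\bm{e}_L^j\|_{E^j}\le C h_0^s$, obtained by the same induction used for $\lambda^j-\lambda_{1,L}\le Ch_0^{2s}$. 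Combining the estimates for both terms then yields $\|I_2^j\|_{E^j}\le Ch_0^s\|\bm{e}_L^j\|_{E^j}$.
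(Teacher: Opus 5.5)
Your split $I_2^j=I_{2,1}^j+I_{2,2}^j$ and the reduction $Q_2^LQ_{U^j}B_L^j\bm{r}^j=-b(B_L^j\bm{r}^j,\bm{u}^j)\,\bm{e}_L^j$ match the paper. The first term, however, has a genuine gap.

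\textbf{First term.} After factoring out $(\lambda^j-\lambda_{1,L})\|Q_1^L\bm{u}^j\|_b$, you need $\|Q_2^LB_L^j\bm{g}\|_{E^j}\le C\|\bm{g}\|_b$ with $C$ independent of $L$. Nothing weaker suffices: since $\|\bm{e}_L^j\|_{E^j}=\sqrt{\lambda^j-\lambda_{1,L}}\,\|Q_1^L\bm{u}^j\|_b$, losing even a factor $\sqrt{\lambda^j-\lambda_{1,L}}$ leaves $C\|\bm{e}_L^j\|_{E^j}$ with no $h_0^s$.
\begin{itemize}
\item Bounding the smoothers one by one in $\sum_lR_l^jQ_l(\bm{g}-\tilde A_L^j\bm{x}_l)$ cannot give this. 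The stage inputs contain $\tilde A_L^j\bm{x}_l$, whose $b$-norm is not controlled by $\|\bm{g}\|_b$ uniformly in the mesh. Also, a level-by-level triangle inequality over $L+1$ adaptively refined levels gives constants that grow with $L$.
\item Writing $B_L^j=(I-E_L^j)(\tilde A_L^j)^{-1}$ only restates the task as $\|Q_2^LE_L^jQ_1^L\bm{u}^j\|_{E^j}\le C(\lambda^j-\lambda_{1,L})\|Q_1^L\bm{u}^j\|_b$. This is not an instance of $\|E_L^j\bm{v}\|_b\le C\|\bm{v}\|_{E^j}$, and $(\cdot,\cdot)_{E^j}$ is negative on $W_1^L$, so no energy contraction is available there.
\end{itemize}
The missing idea is duality in $(\cdot,\cdot)_{E^j}$. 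Since $Q_2^L$ is symmetric and $\tilde E_L^j$ is the adjoint of $E_L^j$ in this form,
\[
\|Q_2^LE_L^jQ_1^L\bm{u}^j\|_{E^j}^2=(Q_1^L\bm{u}^j,\tilde E_L^jQ_2^LE_L^jQ_1^L\bm{u}^j)_{E^j}=(\lambda_{1,L}-\lambda^j)\,b(Q_1^L\bm{u}^j,\tilde E_L^jQ_2^LE_L^jQ_1^L\bm{u}^j).
\]
The $b$-stability $\|\tilde E_L^j\bm{w}\|_b\le C\|\bm{w}\|_{E^j}$ of the adjoint sweep then gives the full factor $\lambda^j-\lambda_{1,L}$. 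The same computation with $I-\tilde E_L^j$ in place of $\tilde E_L^j$ proves your general bound on $Q_2^LB_L^j$ whenever $\tilde A_L^j$ is invertible.

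\textbf{Second term.} Your ``harmless'' pieces actually need the whole vector $(\lambda^j-\lambda_{1,L})B_L^jQ_1^L\bm{u}^j=-(I-E_L^j)Q_1^L\bm{u}^j$ to be small in $\|\cdot\|_b$, including its components in $W_1^L$ and $\nabla S_L$. Your first step, even once repaired, controls only $Q_2^LB_L^j$. The paper bounds this vector directly:
\begin{itemize}
\item $T_0^jQ_1^L\bm{u}^j=(\lambda_{1,L}-\lambda^j)(\tilde A_0^j)^{-1}Q_2^0Q_0Q_1^L\bm{u}^j$ carries the small factor;
\item $\sum_{l\ge1}T_l^jE_{l-1}^jQ_1^L\bm{u}^j$ is $O(h_0)$ by \eqref{eq:lem4_7}, using $(Q_1^L\bm{u}^j,Q_1^L\bm{u}^j)_{E^j}\le 0$ and $\|E_L^jQ_1^L\bm{u}^j\|_b\le C\|Q_1^L\bm{u}^j\|_a$.
\end{itemize}
The cross term you call the main obstacle is in fact immediate:
$\|(I-E_L^j)\bm{e}_L^j\|_b\le\|\bm{e}_L^j\|_b+\|E_L^j\bm{e}_L^j\|_b\le C\|\bm{e}_L^j\|_{E^j}\le Ch_0^s$.
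This is your fallback, and it is exactly the paper's argument. Your primary plan for that term would not work as stated. $B_L^j$ is a single forward sweep and is not $b$-symmetric; its $b$-adjoint is $(I-\tilde E_L^j)(\tilde A_L^j)^{-1}$.
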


\begin{proof}
For the readability, we put the proof of this lemma in the appendix.
\end{proof}

Now we are in a position to present a proof of the main result.

\textbf{Proof of Theorem \ref{thm:4_1}}:
    By \eqref{eq:4_8}, Lemma \ref{lem:4_9} and Lemma \ref{lem:4_10}, we have 
    \begin{eqnarray}\label{eq:4_16}
        \|\tilde{\bm{e}}_L^{j+1}\|_{E^j} &\le& \|I_1^j\|_{E^j} + \|I_2^j\|_{E^j}\nonumber\\
        &\le& (1-\alpha^j(1-\theta))\|\bm{e}_L^j\|_{E^j} + Ch_0^s\|\bm{e}_L^j\|_{E^j}\nonumber\\
        &\le& (1-\alpha^j(1-\theta)+Ch_0^s)\|\bm{e}_L^j\|_{E^j}.
    \end{eqnarray}
Since $U^j+\mathrm{span}\{t_L^{j+1}\}\subset W^{j+1}$, it follows that
        \begin{equation*}
            \lambda^{j+1} \le \tilde{\lambda}^{j+1} = Rq(\tilde{\bm{u}}^{j+1}).
        \end{equation*}
Then we obtain
\begin{equation}\label{eq:4_17}
    \lambda^{j+1}-\lambda_{1,L} \le \tilde{\lambda}^{j+1}-\lambda_{1,L} = \|\tilde{\bm{e}}_L^{j+1}\|_{E^j}^2+(\lambda^j-\lambda_{1,L})\|\tilde{\bm{e}}_L^{j+1}\|_b^2.
\end{equation}
Combining \eqref{eq:4_16}, \eqref{eq:4_17} and the Poincar\'e inequality, we derive
\begin{align*}
        \lambda^{j+1}-\lambda_{1,L} \le&~ (1+Ch_0^s)(1-\alpha^j(1-\theta)+Ch_0^s)\|\bm{e}_L^j\|_{E^j}^2\\ 
        \le &~ (1-\alpha^j(1-\theta)+Ch_0^s)^2\|\bm{e}_L^j\|_{E^j}^2 \\
        \le& ~(1-\alpha^j(1-\theta)+Ch_0^s)^2(\lambda^j-\lambda_{1,L}),
\end{align*}
This implies \eqref{eq:4_2}. Regarding \eqref{eq:4_1}, we observe that
\begin{equation*}
    \|\bm{e}_L^{j+1}\|_{E^j}\le \sqrt{\lambda^{j+1}-\lambda_{1,L}} \le (1-\alpha^j(1-\theta)+C h_0^s)\|\bm{e}_L^j\|_{E^j},
\end{equation*}
which completes the proof. \qed

\section{Numerical results}
\label{sec5}
In this section, we will present several numerical experiments to verify the effectiveness and robustness of Algorithm \ref{alg:1} on simply connected domains, multiply connected domains, and the crack domain. The adaptive mesh refinement is driven by a residual-based a posteriori error estimator (see \cite{MR3918688}). Let $(\lambda_{1,L}, u_{1,L})$ denote the finite element approximation of the eigenpair $(\lambda_1, u_1)$, satisfying the normalization conditions $\|u_{1,L}\|_b=1$ and $(u_1, u_{1,L}) > 0$. For each $K \in \mathcal{T}_L$, the a posteriori error indicator is defined by
\begin{align*}
        \mu_L^2(\bm{u}_{1,L},K) :=&~  h_K^2\left\|\bm{u}_{1,L}-\bm{\mathrm{curl}}\left(\bm{\mathrm{curl}} \bm{u}_{1,L} / \lambda_{1,L}\right)\right\|_{0, K}^2\\
        +\frac{1}{2} \sum_{F \in \mathcal{F}_{\mathrm{I}}(K)}\{h_F\| [\![&\left(\bm{\mathrm{curl}} \bm{u}_{1,L} / \lambda_{1,L}\right) \times \bm{n}]\!] \|_{0, F}^2
    +h_F\| [\![ \bm{u}_{1,L} \cdot \bm{n}]\!] \|_{0, F}^2\},
\end{align*}
where $\mathcal{F}_{\mathrm{I}}(K)$ denotes the set of interior faces of the element $K$, $h_F$ the diameter of $F$, $[\![\cdot]\!]$ the jump across $F$, and $\bm{n}$ the unit normal vector to $F$. Then the global a posteriori error estimate on $\mathcal{T}_L$ is defined by
\begin{equation*}
    \mu_L(\bm{u}_{1,L},\mathcal{T}_L):=\left(\sum_{K \in \mathcal{T}_L} \mu_L^2(\bm{u}_{1,L},K)\right)^{1 / 2}.
\end{equation*}

\par The stopping criterion of the algorithm is $\|\bm{r}^j\|_b<10^{-8}$. In the table summarizing the numerical results, DOF. denotes the degrees of freedom; Iters represents the number of iterations in the Jacobi-type or the Gauss-Seidel-type local multilevel smoother;  Res. denotes the residual norm at convergence (i.e., $\|\bm{r}^j\|_b$) and $\lambda_1^l$ represents the principal eigenvalue at level $l$ obtained by Algorithm \ref{alg:1}. For each of the following examples, the locally refined meshes and the corresponding solutions on a certain level are presented. The reduction of the a posteriori error is presented to confirm the efficiency and reliability of the adaptive finite element method. In addition, the quasi-optimality of the adaptive multilevel PHJD method is demonstrated by the boundedness of the number of iteration counts and the nearly linear increase of CPU time.

\begin{example}\label{ex:5_1}
    We consider the Maxwell eigenvalue problem on the Fichera corner domain $\Omega = (0, 2\pi)^3 \setminus  (\pi, 2\pi)\times(\pi, 2\pi)\times(\pi, 2\pi)$.
\end{example}

\begin{figure*}[htbp]
\centering
\subfigure[level = 1]{
\begin{minipage}[t]{0.45\linewidth} 
\centering
\includegraphics[scale=0.5]{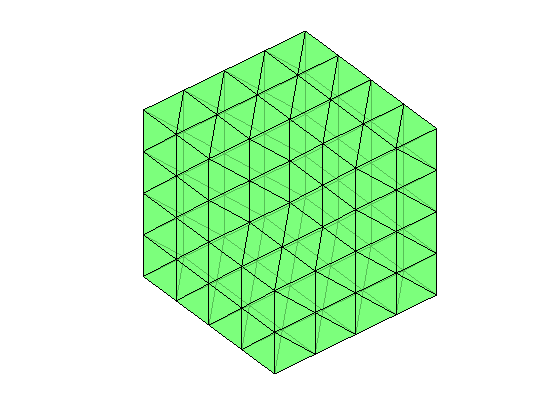}
\end{minipage}
}
\subfigure[level = 15]{
\begin{minipage}[t]{0.45\linewidth} 
\centering
\includegraphics[scale=0.5]{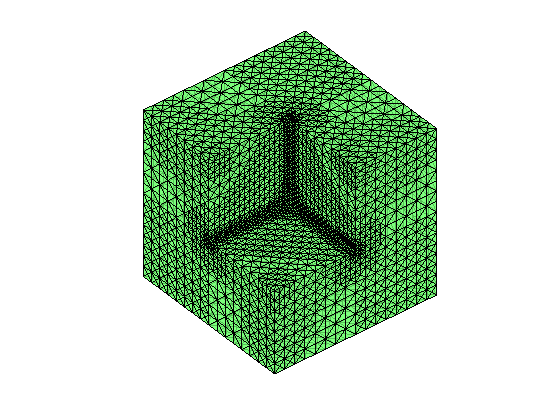}
\end{minipage}
}
\centering
\caption{The local refined mesh on adaptive levels 1 and 15}
\label{fig:5_1}
\end{figure*}

\begin{table}[H]
\centering
\begin{tabular}{|c|c|c|c|c|c|}
\hline
  Level($l$) & DOF. & Iters(Jacobi) & Iters(GS) & Res. & $\lambda_1^l$\\
  \hline
  4 & 1282 & 24 & 14 & 4.3274e-09 & 0.301528057\\
  \hline
  8 & 9698 & 24 & 14 & 5.1436e-09 & 0.317458912\\
  \hline
  11 & 38629 & 24 & 13 & 7.8428e-09 & 0.322869279\\
  \hline
  14 & 152014 & 24 & 13 & 8.6064e-09 & 0.324486372 \\
  \hline
  16 & 360002 & 23 & 13 & 5.2117e-09 & 0.325503648 \\
  \hline
  19 & 1325916 & 23 & 12 & 9.5636e-09 & 0.325931977 \\
  \hline
\end{tabular}
\caption{The numerical results on each level}\label{tab:5_1}
\end{table}

Figure \ref{fig:5_1} displays a locally refined mesh of 234,299 elements generated by the adaptive finite element algorithm, which accurately captures the locations of singularities. It can be observed from Table \ref{tab:5_1} that the adaptive multilevel PHJD algorithm converges in nearly the same number of iterations across all levels, although the number of degrees of freedom increases to 1,325,916. This indicates that the convergence rate is quasi-optimal.
\begin{figure}[htbp]
\begin{minipage}[t]{0.5\textwidth}
\centering
\includegraphics[width=\textwidth]{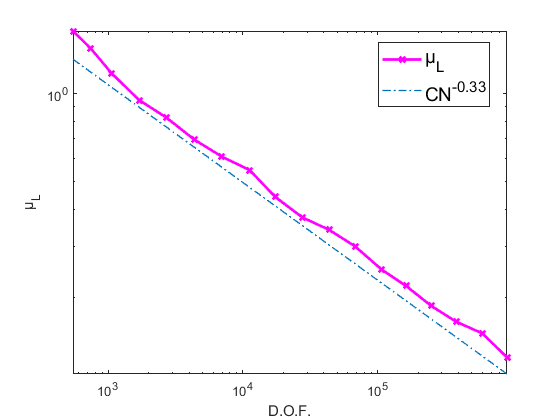}
\caption{Reduction of a posteriori error}
\label{fig:5_2}
\end{minipage}
\begin{minipage}[t]{0.5\textwidth}
\centering
\includegraphics[width=\textwidth]{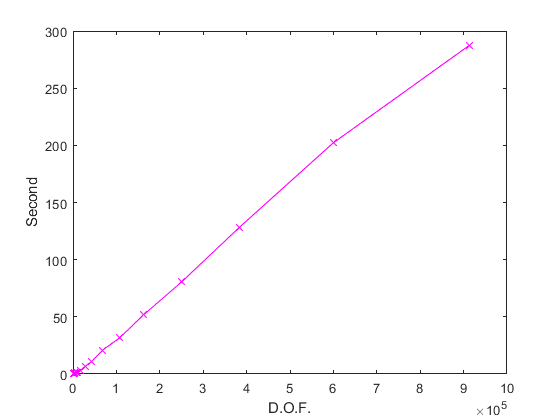}
\caption{Increasing of CPU time}
\label{fig:5_3}
\end{minipage}
\end{figure}

Figure \ref{fig:5_2} shows that the error curves obtained with uniform meshes and with adaptively refined meshes are almost parallel, which verifies the effectiveness and reliability of the a posteriori error estimator. Figure \ref{fig:5_3} illustrates that the CPU time of the adaptive multilevel PHJD iteration scales linearly with the number of degrees of freedom.

\begin{example}\label{ex_3}
    To simulate the situation of electromagnetic coils in practical applications, we consider the Maxwell eigenvalue problem on domain $\Omega = (0, 2\pi)^3 \setminus  (0.5\pi,1.5\pi)\times(0.5\pi,1.5\pi)\times(0,2\pi)$
\end{example}

Figure \ref{fig:5_7} displays a locally refined mesh of 273,714 elements generated by the adaptive finite element algorithm. Local mesh refinement is mainly concentrated in the corner regions within the solenoid interior. It can be observed from Table \ref{tab:5_3} that the adaptive multilevel PHJD algorithm converges in almost the same small number of steps. This indicates that the convergence rate is quasi-optimal, which illustrates that our adaptive multilevel PHJD method is scalable for the multiply connected domain.

Figure \ref{fig:5_8} shows that the error curves obtained with uniform meshes and with adaptively refined meshes are almost parallel. Figure \ref{fig:5_9} illustrates that the CPU time for solving the algebraic system increases roughly linearly with respect to the degrees of freedom.

\begin{figure*}[htbp]
\centering
\subfigure[level = 1]{
\begin{minipage}[t]{0.45\linewidth} 
\centering
\includegraphics[scale=0.5]{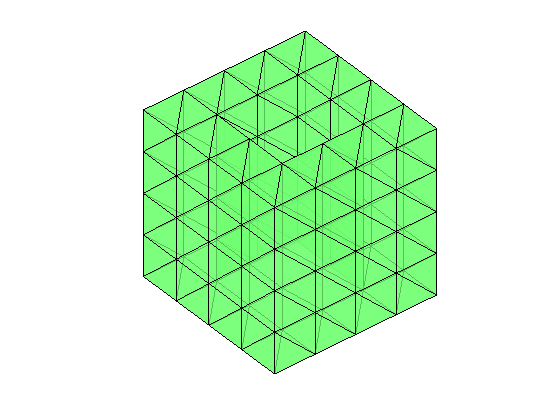} 
\end{minipage}
}
\subfigure[level = 15]{
\begin{minipage}[t]{0.45\linewidth} 
\centering
\includegraphics[scale=0.5]{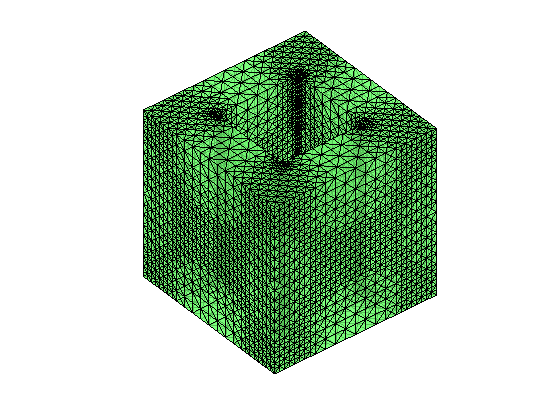}
\end{minipage}
}
\centering
\caption{The local refined mesh on adaptive levels 1 and 15}
\label{fig:5_7}
\end{figure*}

\begin{table}[H]
\centering
\begin{tabular}{|c|c|c|c|c|c|}
\hline
  Level($l$) & DOF. & Iters(Jacobi) & Iters(GS) & Res. & $\lambda_1^l$\\
  \hline
  4 & 2071 & 25 &14 & 6.6786e-09 & 0.245025619\\
  \hline
  8 & 11804 & 30 &14 & 5.5633e-09 & 0.248053377\\
  \hline
  11 & 52344 & 28 &13 & 6.6920e-09 & 0.248952103\\
  \hline
  14 & 196592 & 28 &13 & 4.9924e-09 & 0.249614806 \\
  \hline
  16 & 465860 & 27 &12 & 9.0458e-09 & 0.249753098 \\
  \hline
  18 & 1102892 & 28 &13 & 4.8462e-09 & 0.249876756 \\
  \hline
\end{tabular}
\caption{The numerical results on each level}\label{tab:5_3}
\end{table}

\begin{figure}[htbp]
\begin{minipage}[t]{0.5\textwidth}
\centering
\includegraphics[width=\textwidth]{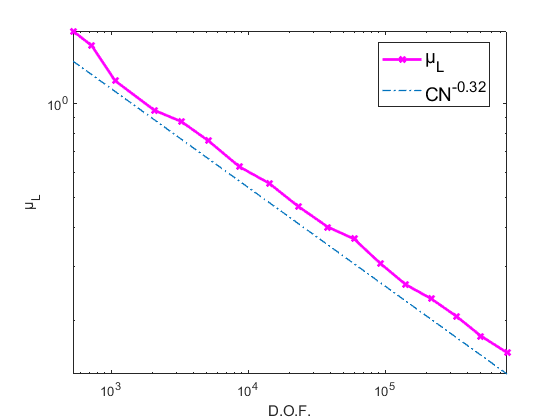}
\caption{Reduction of a posteriori error}
\label{fig:5_8}
\end{minipage}
\begin{minipage}[t]{0.5\textwidth}
\centering
\includegraphics[width=\textwidth]{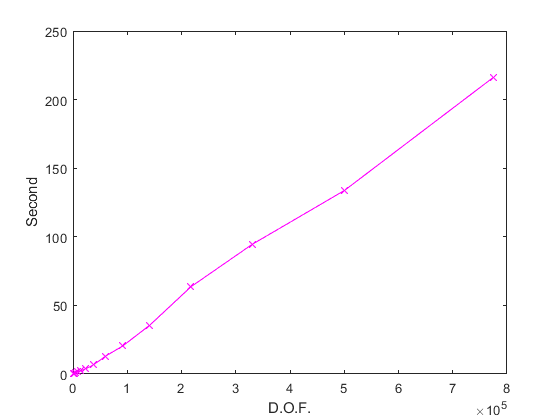}
\caption{Increasing of CPU time}
\label{fig:5_9}
\end{minipage}
\end{figure}

\par The following two examples demonstrate that our adaptive multilevel PHJD method remains efficient for domains with multiply connected boundaries.

\begin{example}\label{ex_4}
    We consider the Maxwell eigenvalue problem on cubic cavity whose boundary is multiply connected: $\Omega = (0, 2\pi)^3 \setminus  (Q_1\cup Q_2)$, where $Q_1 = (0.5\pi,\pi)\times(0.5\pi,\pi)\times(\pi,1.5\pi)$ and $Q_2 = (0.5\pi,\pi)\times(\pi,1.5\pi)\times(0.5\pi,\pi)$.
\end{example}

We modified Algorithm \ref{alg:1} by adding a step that projects the iterative solution onto the subspace orthogonal to the low-dimensional space of harmonic vector fields. Figure \ref{fig:5_13} shows a locally refined mesh of 229,019 elements generated by the adaptive finite element algorithm. We observe that the mesh is locally refined near the boundary of the cavities. Table \ref{tab:5_5} indicates that the iteration count of the adaptive multilevel PHJD algorithm remains small and stable, even as the number of degrees of freedom increases to 991,722. This indicates that the convergence rate is quasi-optimal, demonstrating that our adaptive multilevel PHJD method is scalable for the multiply connected domain.

\begin{figure*}[htbp]
\centering
\subfigure[level = 1]{
\begin{minipage}[t]{0.45\linewidth} 
\centering
\includegraphics[scale=0.5]{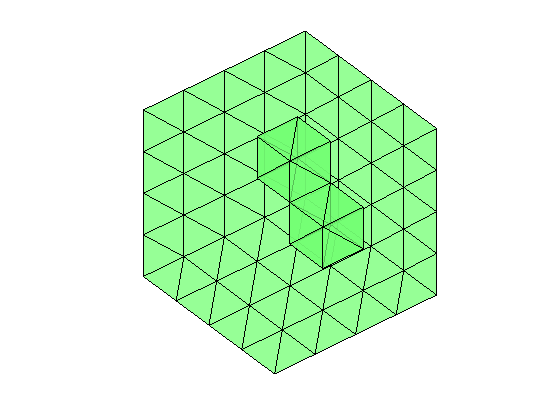} 
\end{minipage}
}
\subfigure[level = 15]{
\begin{minipage}[t]{0.45\linewidth} 
\centering
\includegraphics[scale=0.5]{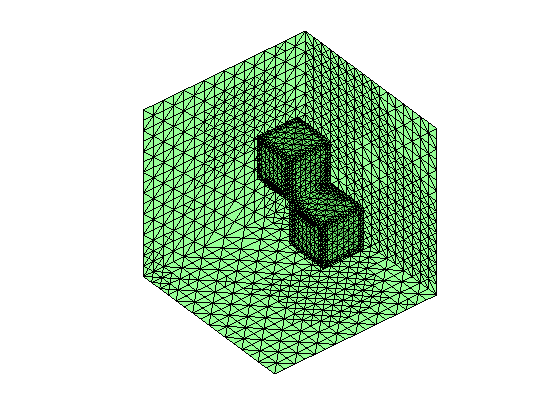}
\end{minipage}
}
\centering
\caption{The local refined mesh on adaptive levels 1 and 15}
\label{fig:5_13}
\end{figure*}

\begin{table}[H]
\centering
\begin{tabular}{|c|c|c|c|c|c|}
\hline
  Level($l$) & DOF. & Iters(Jacobi) & Iters(GS) & Res. & $\lambda_1^l$\\
  \hline
  4 & 1513 & 24 & 14 & 6.3401e-09 & 0.168228501\\
  \hline
  8 & 11731 & 26 & 14 & 8.1663e-09 & 0.189167802\\
  \hline
  11 & 47530 & 25 & 15 & 6.0027e-09 & 0.195310401\\

  \hline
  14 & 183585 & 25 & 15 & 5.1208e-09 & 0.198389974 \\
  \hline
  16 & 418433 & 25 & 14 & 9.5662e-09 & 0.199332014 \\
  \hline
  18 & 991722 & 26 & 15 & 7.7889e-09 & 0.199882133 \\
  \hline
\end{tabular}
\caption{The numerical results on each level}\label{tab:5_5}
\end{table}

\begin{figure}[htbp]
\begin{minipage}[t]{0.5\textwidth}
\centering
\includegraphics[width=\textwidth]{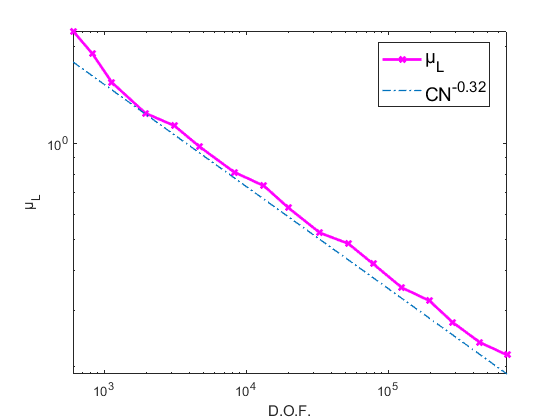}
\caption{Reduction of a posteriori error}
\label{fig:5_14}
\end{minipage}
\begin{minipage}[t]{0.5\textwidth}
\centering
\includegraphics[width=\textwidth]{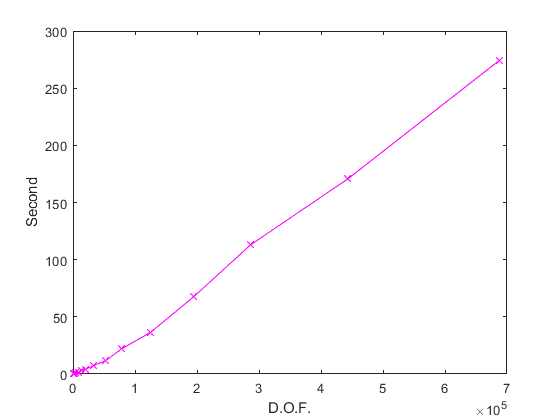}
\caption{Increasing of CPU time}
\label{fig:5_15}
\end{minipage}
\end{figure}

Figure \ref{fig:5_14} displays that the error curves obtained with uniform meshes and with adaptively refined meshes are almost parallel. Figure \ref{fig:5_15} shows that for the CPU time of the adaptive multilevel PHJD iteration is almost linear in terms of the degrees of freedom.

\begin{example}\label{ex_6}
    We consider the Maxwell eigenvalue problem on multiply
connected domain with cubic cavity: $\Omega = (0, 4\pi)^3 \setminus  (Q_1\cup Q_2)$, where $Q_1 = (2\pi,3\pi)\times(2\pi,3\pi)\times(0,4\pi)$ and $Q_2 = (0.5\pi,1.5\pi)\times(0.5\pi,1.5\pi)\times(1.5\pi,2.5\pi)$.
\end{example}

We modified the algorithm in the same manner as in Example \ref{ex_4}. Figure \ref{fig:6_1} depicts a locally refined mesh of 228,729 elements generated by the adaptive finite element algorithm. We observe that the mesh is locally refined near the boundary of the cavities and in the corner regions within the solenoid interior. It can be observed from Table \ref{tab:6_1} that in each case the iteration counts remain almost uniform on different levels, although the number of degrees of freedom increases up to 1,293,489. This confirms the quasi-optimality of the proposed method.

\begin{figure*}[htbp]
\centering
\subfigure[outside]{
\begin{minipage}[t]{0.45\linewidth} 
\centering
\includegraphics[scale=0.5]{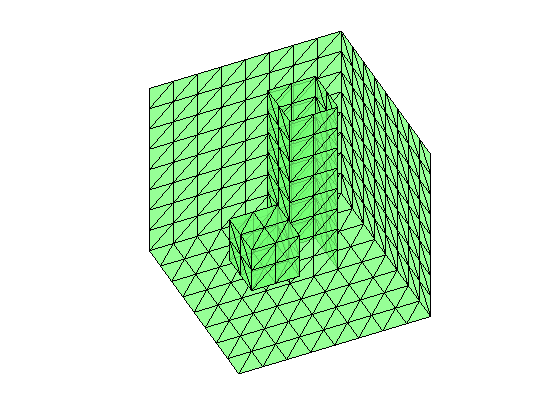} 
\end{minipage}
}
\subfigure[inside]{
\begin{minipage}[t]{0.45\linewidth} 
\centering
\includegraphics[scale=0.5]{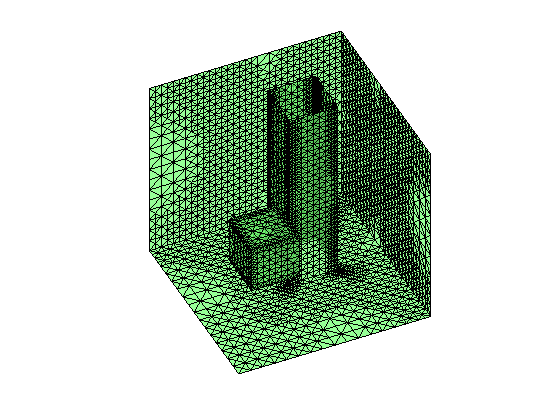}
\end{minipage}
}
\centering
\caption{The local refined mesh on adaptive level 11}
\label{fig:6_1}
\end{figure*}

\begin{table}[H]
\centering
\begin{tabular}{|c|c|c|c|c|c|}
\hline
  Level($l$) & DOF. & Iters(Jacobi) & Iters(GS) & Res. & $\lambda_1^l$\\
  \hline
  5 & 15728 & 23 & 14 & 4.6114e-09 & 0.059163470\\
  \hline
  7 & 37731 & 23 & 13 & 6.0091e-09 & 0.059657836\\
  \hline
  9 & 91931 & 22 & 13 & 5.5289e-09 & 0.059977658\\

  \hline
  11 & 228729 & 23 & 13 & 5.2923e-09 & 0.060170893 \\
  \hline
  13 & 534404 & 22 & 12 & 6.4284e-09 & 0.060282554 \\
  \hline
  15 & 1293489 & 22 & 12 & 7.8593e-09 & 0.060342783 \\
  \hline
\end{tabular}
\caption{The numerical results on each level}\label{tab:6_1}
\end{table}

\begin{figure}[htbp]
\begin{minipage}[t]{0.5\textwidth}
\centering
\includegraphics[width=\textwidth]{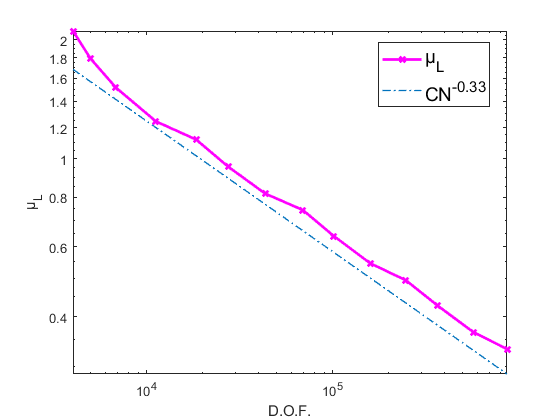}
\caption{Reduction of a posteriori error}
\label{fig:6_2}
\end{minipage}
\begin{minipage}[t]{0.5\textwidth}
\centering
\includegraphics[width=\textwidth]{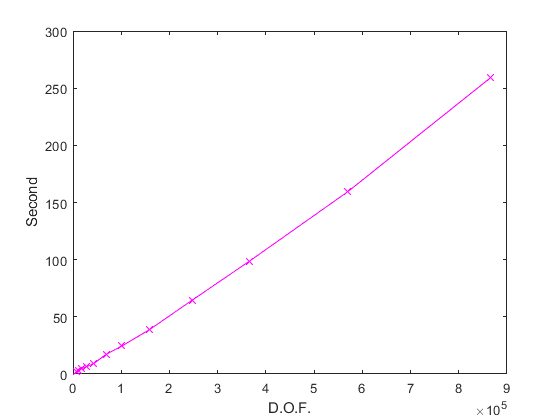}
\caption{Increasing of CPU time}
\label{fig:6_3}
\end{minipage}
\end{figure}

Figure \ref{fig:6_2} demonstrates the effectiveness and reliability of the a posteriori error estimator. Figure \ref{fig:6_3} shows that the CPU time of the adaptive multilevel PHJD iteration scales linearly with the number of degrees of freedom.

The next example bears out that the adaptive multilevel PHJD method is also efficient for the problems in non-Lipschitz domains, which are outside the scope of our theory.
\begin{example}\label{ex_5}
    We address the crack problem in its standard formulation. Considering the Maxwell eigenvalue problem on domain $\Omega = (0, 2\pi)^3 \setminus  (x,\pi,z):\pi \le x \le 2\pi, 0 \le z \le 2\pi $
\end{example}

\begin{figure*}[htbp]
\centering
\subfigure[outside]{
\begin{minipage}[t]{0.45\linewidth} 
\centering
\includegraphics[scale=0.5]{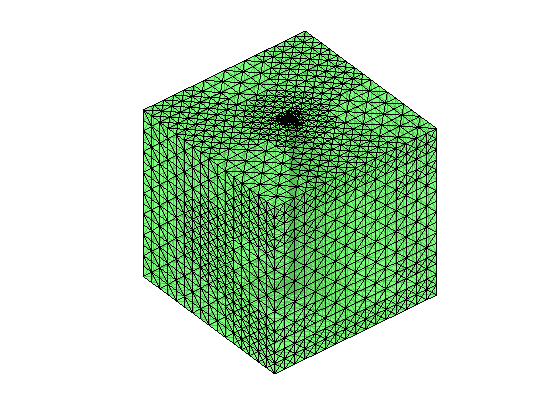} 
\end{minipage}
}
\subfigure[inside]{
\begin{minipage}[t]{0.45\linewidth} 
\centering
\includegraphics[scale=0.5]{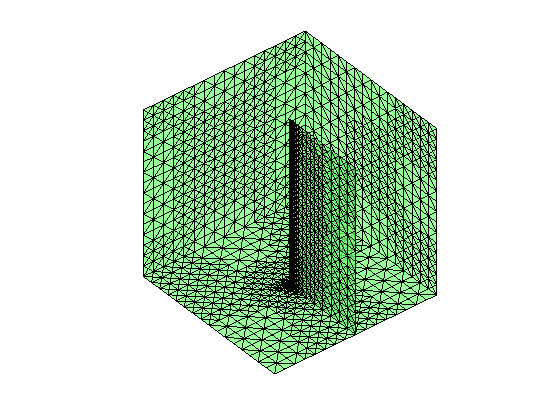}
\end{minipage}
}
\centering
\caption{The local refined mesh on adaptive level 20}
\label{fig:5_10}
\end{figure*}

Figure \ref{fig:5_10} displays a locally refined mesh of 233,311 elements generated by the adaptive finite element algorithm. It shows that singularities are predominantly localized in the central region of the crack. Table \ref{tab:5_4} indicates that the iteration count of the adaptive multilevel PHJD algorithm remains small and stable, even as the number of degrees of freedom increases up to 1,251,925. This indicates that the convergence rate is quasi-optimal, which illustrates that our adaptive multilevel PHJD method is scalable for the non-Lipschitz domain.

\begin{table}[H]
\centering
\begin{tabular}{|c|c|c|c|c|c|}
\hline
  Level($l$) & DOF. & Iters(Jacobi) & Iters(GS) & Res. & $\lambda_1^l$\\
  \hline
  5 & 285 & 24 & 16 & 5.6978e-09 & 0.319151975\\
  \hline
  9 & 2966 & 28 & 17 & 4.3451e-09 & 0.337844743\\
  \hline
  11 & 7989 & 29 & 17 & 4.4389e-09 & 0.345413445\\
  \hline
  15 & 40613 & 30 & 17 & 7.3772e-09 & 0.351146037\\
  \hline
  17 & 95097 & 29 & 16 & 5.8351e-09 & 0.352123877 \\
  \hline
  20 & 289815 & 29 & 16 & 4.8097e-09 & 0.353765744 \\
  \hline
  24 & 1251925 & 28 & 16 & 8.4293e-09 & 0.354253128 \\
  \hline
\end{tabular}
\caption{The numerical results on each level}\label{tab:5_4}
\end{table}

\begin{figure}[htbp]
\begin{minipage}[t]{0.5\textwidth}
\centering
\includegraphics[width=\textwidth]{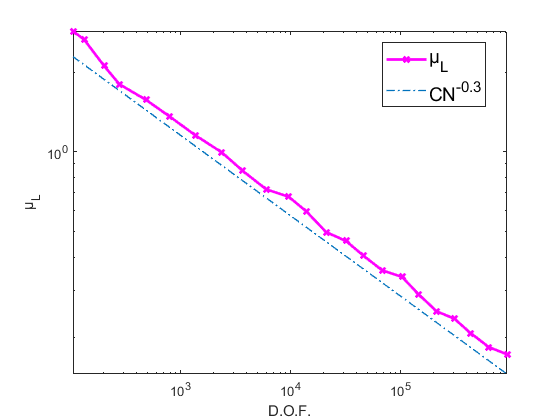}
\caption{Reduction of a posteriori error}
\label{fig:5_11}
\end{minipage}
\begin{minipage}[t]{0.5\textwidth}
\centering
\includegraphics[width=\textwidth]{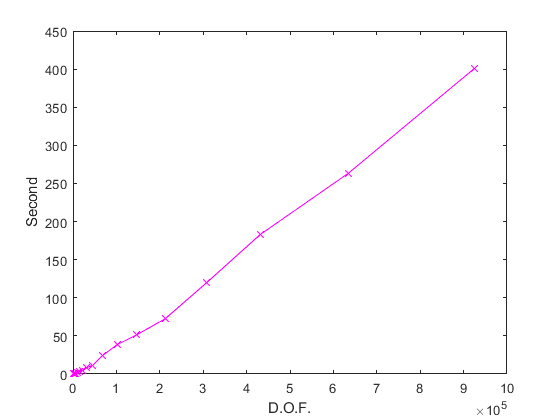}
\caption{Increasing of CPU time}
\label{fig:5_12}
\end{minipage}
\end{figure}

Due to the non-Lipschitz geometry, Figure \ref{fig:5_11} shows that the convergence rate of adaptively refined meshes 
deteriorates to a lower order compared to uniform meshes. Figure \ref{fig:5_12} illustrates that the CPU time of solving the algebraic system still increases roughly linearly with respect to the degrees of freedom, which reveals the quasi-optimality of the algorithm.

\appendix

\section{The proof of Lemma \ref{lem:4_10}}
\textbf{Proof of Lemma \ref{lem:4_10}}:\ For the estimate of $I_2^j$, we split it into two parts
\begin{equation*}
    I_{2,1}^j = \alpha^{j}Q_{2}^{L}B_{L}^{j}(A_{L}-\lambda^{j}I)Q_{1}^{L}u^{j},\ \ \ \ \ ~I_{2,2}^j = \alpha^{j}Q_{2}^{L}Q_{Uj}B_{L}^{j}r^{j}.
\end{equation*}
For $I_{2,1}^j$, using the definition of $\tilde{A}_L^j$, the Cauchy-Schwarz inequality, and the estimate $\|\tilde{E}_L^j\bm{v}\|_b\leq C\|\bm{v}\|_{E^j}$, we have
\begin{align*}
\|\alpha^{j}Q_{2}^{L}B_{L}^{j}\tilde{A}_{L}^jQ_{1}^{L}\bm{v}^{j}\|_{E^j}^2 
    =&~(\alpha^{j})^2(E_L^jQ_{1}^{L}\bm{u}^{j},Q_{2}^{L}E_L^jQ_{1}^{L}\bm{u}^{j})_{E^j}\\
    =&~(\alpha^{j})^2(Q_{1}^{L}\bm{u}^{j},\tilde{E}_L^jQ_{2}^{L}E_L^jQ_{1}^{L}\bm{u}^{j})_{E^j}\\
    \le &~C(\alpha^{j})^2(\lambda^j-\lambda_{1,L})\|Q_{1}^{L}\bm{u}^{j}\|_b\|Q_{2}^{L}E_L^jQ_{1}^{L}\bm{u}^{j}\|_{E^j}.
\end{align*}
Combining the above results with the estimate $\lambda^{j}- \lambda_{1,L} \leq C h_0^{2s}$ and the identity $\sqrt{\lambda^j-\lambda_{1,L}}\|Q_{1}^{L}\bm{u}^{j}\|_b = \|\bm{e}_L^j\|_{E^j}$, we obtain
\begin{equation*}
    \|I_{2,1}^j\|_{E^j}\le Ch_0^s\|\bm{e}_L^j\|_{E^j}.
\end{equation*}

Moreover, for $I_{2,2}^j$, using the definition of $Q_{U^j}$ and the Cauchy-Schwarz inequality, we get
    \begin{align*}
    \|\alpha^{j}Q_{2}^{L}Q_{U^j}B_{L}^{j}\bm{r}^{j}\|_{E^j} =~\alpha^{j}|b(B_L^j\bm{r}^j,\bm{u}^j)|\|Q_2^L\bm{u}^j\|_{E^j}
    \le ~\alpha^{j}\|B_L^j\bm{r}^j\|_b\|\bm{e}_L^j\|_{E^j}.
    \end{align*}
    To estimate $\|B_L^jr^j\|_b$, we utilize the definition of $E_L^j$ and the spatial decomposition to split the term into two parts
    \begin{align*}
    \|B_L^j\bm{r}^j\|_b 
    =~\|(I-E_L^j)\bm{u}^j\|_b
    \le ~\|(I-E_L^j)Q_1^L\bm{u}^j\|_b+\|(I-E_L^j)Q_2^L\bm{u}^j\|_b.
    \end{align*}
For the first part, applying \eqref{eq:3_2}, \eqref{eq:lem4_7}, and the definition of $T_0^j$, together with the estimates $\lambda^{j}- \lambda_{1,L} \leq C h_0^{2s}$ and $\|E_L^j Q_1^L\bm{u}^j\|_b \leq C\|Q_1^L\bm{u}^j\|_{a}$, yields
    \begin{align*}
    \|(I-E_L^j)Q_1^L\bm{u}^j\|_b\le&\|T_0^jQ_1^L\bm{u}^j\|_b+\|\sum_{l=1}^{L}T_l^jE_{l-1}^jQ_1^L\bm{u}^j\|_b\\
    \le&~ \|(\tilde{A}_0^j)^{-1}Q_2^0Q_0\tilde{A}_L^jQ_1^L\bm{u}^j\|_b+\|\sum_{l=1}^{L}T_l^jE_{l-1}^jQ_1^L\bm{u}^j\|_b\\
    \le &~C\sqrt{\lambda^j-\lambda_{1,L}}\|Q_1^L\bm{u}^j\|_{b}+Ch_0((Q_1^L\bm{u}^j,Q_1^L\bm{u}^j)_{E^j}\\
&+\lambda^jb(E_L^jQ_1^L\bm{u}^j,E_L^jQ_1^L\bm{u}^j)))^{\frac{1}{2}}\\
    \le &~ Ch_0^s.
    \end{align*}
   The second part is bounded using the triangle inequality, the estimates $\lambda^{j}- \lambda_{1,L} \leq C h_0^{2s}$ and the estimate of $\|E_L^jQ_2^L\bm{u}^j\|_b$. Specifically, we have
    \begin{align*}
    \|(I-E_L^j)Q_2^L\bm{u}^j\|_b\le \|\bm{e}_L^j\|_b+\|Q_2^L\bm{u}^j\|_a\le&~C \|\bm{e}_L^j\|_{E^j}\le Ch_0^s.
    \end{align*}
Combining the estimates of $I_{2,1}^{j}$ and $I_{2,2}^{j}$, we finish the proof. \qed


\bibliographystyle{plain}

\bibliography{references}

\end{document}